\newtheorem{theorem}{Theorem}[section]
\newtheorem{lemma}[theorem]{Lemma}
\newtheorem{proposition}[theorem]{Proposition}
\newtheorem{corollary}[theorem]{Corollary}
\theoremstyle{remark}
\renewcommand{\Re}{\operatorname{Re}\,}
\renewcommand{\Im}{\operatorname{Im}\,}
		\newcommand{\N}{\mathbb{N}}
		\newcommand{\R}{\mathbb{R}}
		\newcommand{\C}{\mathbb{C}}
		\newcommand{\A}{\textbf{A}}
		\newcommand{\dd}{\mathrm{d}}
		\newcommand{\bb}{\textbf{b}}
\newcommand{\curl}{\mathop{\mathrm{curl}}\nolimits}	
\begin{document}
			\title[]{Abrupt changes in the spectra of
			the Laplacian with constant complex magnetic field}
			\author{David Krej\v{c}i\v{r}\'{i}k}
			\address[David Krej\v{c}i\v{r}\'{i}k]{Department of Mathematics, Faculty of Nuclear Sciences and Physical Engineering, Czech Technical University in Prague, ul. Trojanova 13/339, 12000 Prague, Czech Republic.}
			\email{david.krejcirik@fjfi.cvut.cz}
			\author{Tho Nguyen Duc}
			\address[Tho Nguyen Duc]{Department of Mathematics, Faculty of Nuclear Sciences and Physical Engineering, Czech Technical University in Prague, ul. Trojanova 13/339, 12000 Prague, Czech Republic.}
			\email{thonguyen.khtn11@gmail.com}
			
			\author{Nicolas Raymond}
			\address[Nicolas Raymond]{Univ Angers, CNRS, LAREMA, Institut Universitaire de France, SFR MATHSTIC, 49000 Angers, France}
			\email{nicolas.raymond@univ-angers.fr}
			
			\date{23 December 2024}
			
			\begin{abstract}
We analyze the spectrum of the Laplace operator, subject to homogeneous complex magnetic fields in the plane. For real magnetic fields, it is well-known that the spectrum consists of isolated eigenvalues of infinite multiplicities (Landau levels). We demonstrate that when the magnetic field has a nonzero imaginary component, the spectrum expands to cover the entire complex plane. Additionally, we show that the Landau levels (appropriately rotated and now embedded in the complex plane) persists, unless the magnetic field is purely imaginary in which case they disappear and the spectrum becomes purely continuous. 
			\end{abstract}
			\maketitle
		
\section{Introduction}
%
\subsection{Motivation}
The spectrum of the Laplacian on the Euclidean plane is purely continuous. 
In 1930, Landau~\cite{Landau30} perceived the striking fact that
turning on the homogeneous magnetic field makes the spectrum pure point.
Now the mathematical description is through the magnetic Laplacian
\begin{equation}\label{operator}
  \mathscr{L}_\bb = 
  (-i\nabla -\A)^2
  \qquad \mbox{in} \qquad
  L^2(\R^2)
\end{equation}
with the vector potential  $\A(x)=\bb \, (0,x_{1})$
generating the constant magnetic field $\curl \A = \bb \in \R$.
Unless $\bb = 0$,
the spectrum of~$\mathscr{L}_\bb$ is composed of
isolated eigenvalues of infinite multiplicities
called \emph{Landau levels}
(see (A) versus (B) in Figure~\ref{Figure Spec}).
The quantization of spectra has been fundamental
in describing magnetic phenomena in physics,
in particular the quantum Hall effect.
We refer to 
\cite{Avron-Herbst-Simon_1978,CFKS-2nd,Mohamed-Raikov,
Erdos_2007,Fournais-Helffer_2009,Raymond-book}
for mathematically oriented surveys. 

Because of the quantum-mechanical purposes,
the almost century-long history has been restricted 
to real-valued magnetic fields.
Recent years, however, have brought new physical motivations 
for considering the Laplacian with complex magnetic fields.
Among these, there are superconductors, quantum statistical physics, 
stability of black holes in general relativity 
and the new concept of quasi-self-adjointness in quantum theory.
We refer to our preceding papers~\cite{K13,KNR24} and references therein.
Mathematically, the complexification involves 
the necessary passage to the unexplored realm 
of non-self-adjoint Schr\"odinger operators
with complex-valued vector potentials. 

The case of constant magnetic field was left untouched 
in our precedent paper~\cite{KNR24} for two reasons. 
First, the unboundedness of the vector potential~$\A$
leads to technical difficulties 
that we have been able to overcome only now. 
Second, and more importantly, the effect of complexifying 
the uniform magnetic field leads to spectrally striking phenomena 
that do not exist in the case of local fields considered in~\cite{KNR24}.
Indeed, the objective of the present paper is to show that 
the spectrum of~$\mathscr{L}_\bb$ with $\bb\in\C$
becomes the whole complex plane 
unless~$\bb$ is real.
What is more, the complexification 
has little effect on the Landau levels 
(they are just rotated in the complex plane),
unless~$\bb$ is purely imaginary  
in which case they disappear completely
 (see Figure~\ref{Figure Spec}).
In summary, the purely real or purely imaginary homogeneous magnetic fields 
are the only realizations for which the magnetic Laplacian 
possesses a purely continuous spectrum.  

\begin{figure}[ht!]
     \centering
     \begin{subfigure}[c]{0.19\textwidth}
         \centering
         \includegraphics[width=\textwidth]{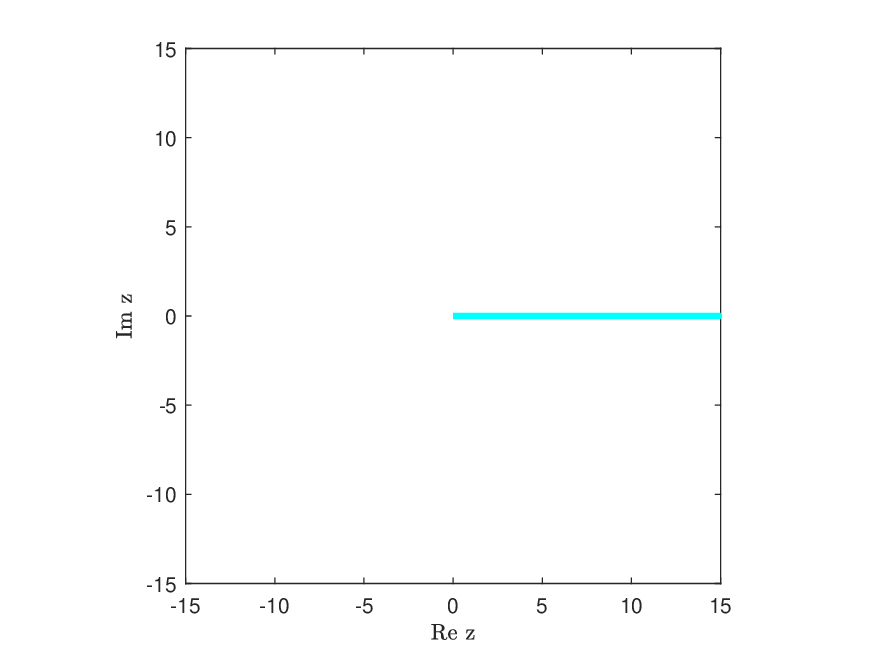}
         \caption{$\bb=0$.}
         \label{b=0}       
     \end{subfigure}
     \hfill
     \begin{subfigure}[c]{0.19\textwidth}
         \centering
         \includegraphics[width=\textwidth]{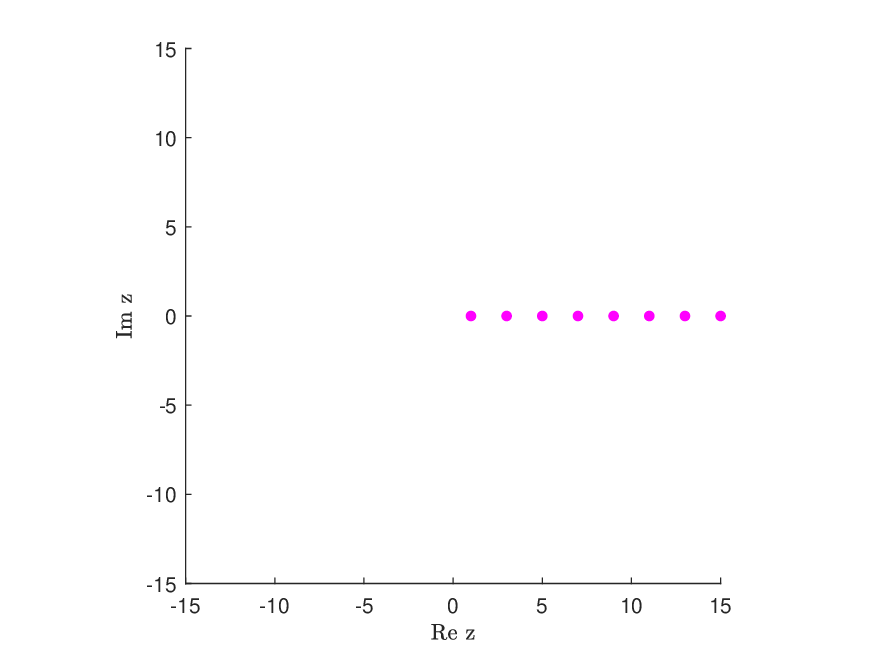}
         \caption{$\bb=\pm 1$.}\label{b=1}      
     \end{subfigure}
     \hfill
     \begin{subfigure}[c]{0.19\textwidth}
         \centering
         \includegraphics[width=\textwidth]{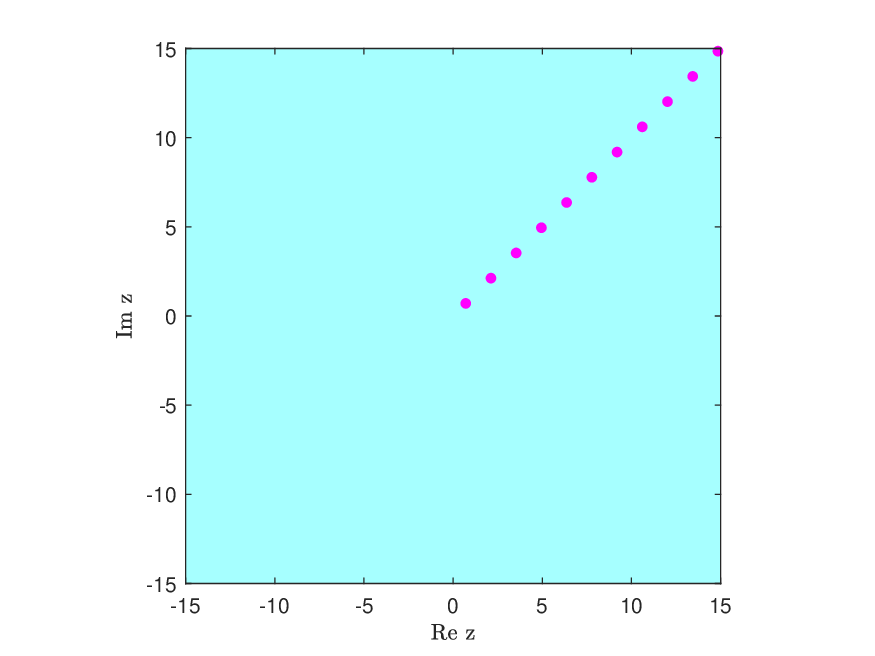}
         \caption{$\bb= \pm e^{i\pi/4}$.}     
     \end{subfigure}
     \hfill
     \begin{subfigure}[c]{0.19\textwidth}
         \centering
         \includegraphics[width=\textwidth]{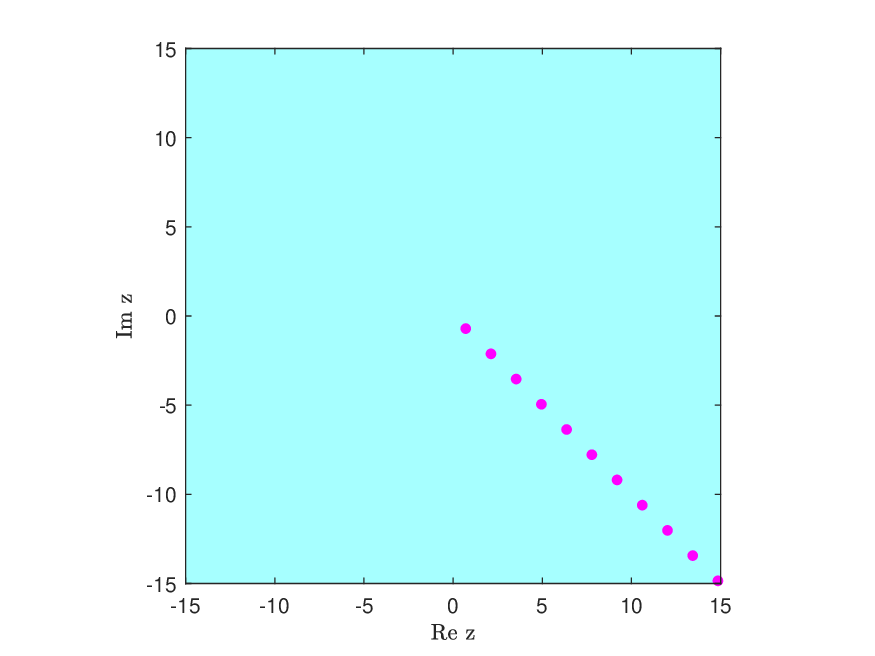}
         \caption{$\bb = \pm e^{-i\pi/4}$.}       
     \end{subfigure}     
     \hfill
     \begin{subfigure}[c]{0.19\textwidth}
         \centering
         \includegraphics[width=\textwidth]{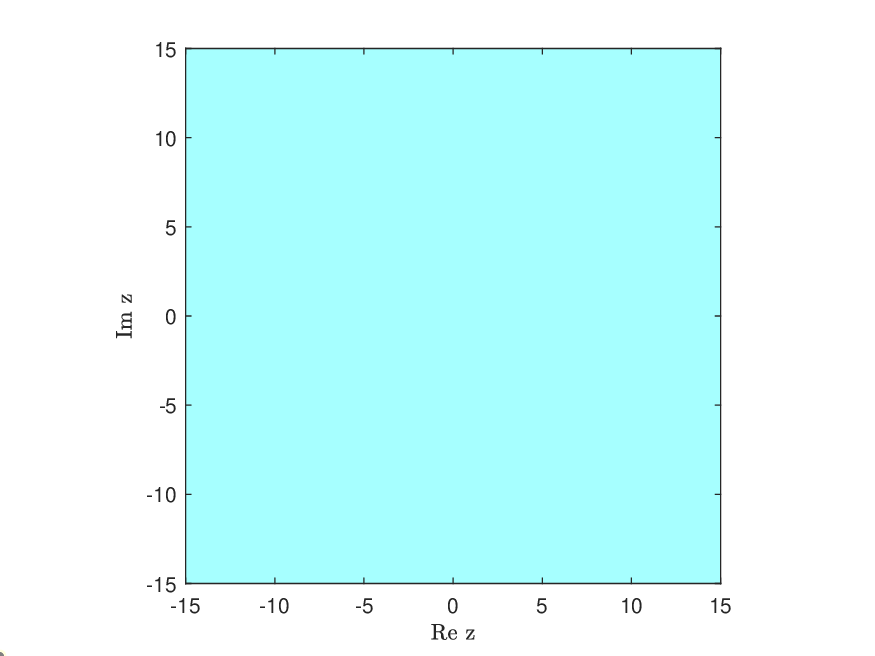}
         \caption{$\bb = \pm i$.}         
     \end{subfigure}
        \caption{The spectrum of the operator $\mathscr{L}_{\bb}$ for various values of $\bb$. The continuous spectrum is depicted in cyan, while the point spectrum is shown in magenta.}
        \label{Figure Spec}
     \end{figure}

\subsection{Main results}
Let us now describe the content of this paper in more detail.
We are interested in the maximal realization of~\eqref{operator},
where $\bb\in\C$.
More specifically, we consider
\begin{equation}\label{Operator}
	\begin{aligned}
		\mathscr{L}_\bb =& (-i\partial_{x_{1}})^2 + (-i\partial_{x_{2}}-\bb x_{1})^2\,,\\
		\operatorname{Dom}(\mathscr{L}_\bb)=&\left\{\psi\in L^2(\R^2) :\left[(-i\partial_{x_{1}})^2 + (-i\partial_{x_{2}}-\bb x_{1})^2\right]\psi\in L^2(\R^2)
		\right\}\,.
	\end{aligned}
\end{equation}

When $\bb$ is real, the operator $\mathscr{L}_{\bb}$ can be defined by using the Lax--Milgram theorem on an appropriate magnetic Sobolev space. However, when $\bb$ is not real, this approach is not applicable due to the lack of coerciveness of the corresponding quadratic form (see \cite[Ex.~1]{KNR24}). Consequently, there is no standard tool to compute the adjoint of $\mathscr{L}_{\bb}$ in this case.  Moreover, in (magnetic) Sobolev spaces, the density of $C_{c}^{\infty}(\R^2)$ can be established by employing mollifiers together with an expanding cutoff sequence. This relies on the assumption that first-order (covariant) derivatives belong to $L^2(\R^2)$. However, when $\bb$ is complex, this property no longer holds in $\operatorname{Dom}(\mathscr{L}_{\bb})$, as the operator is not essentially self-adjoint. Consequently, proving the density of $C_{c}^{\infty}(\R^2)$ becomes unattainable in the usual sense. To overcome this difficulty, in Section~\ref{Subsec weak core}, we introduce a novel approach, which we call the \emph{weak core method}, to establish that $C_{c}^{\infty}(\R^2)$ forms a weak core of the domain. This result enables us to compute the adjoint of the maximal operator $\mathscr{L}_{\bb}$ directly from its definition. In this way, we manage to show that $\mathscr{L}_\bb$ is well defined
for any $\bb \in \C$.
\begin{proposition}\label{Theo Adjoint}
$\mathscr{L}_{\bb}$ is a closed, densely defined operator and its adjoint is given by 
\begin{equation}\label{Csa}
  \mathscr{L}_{\bb}^{*}
  = \mathscr{L}_{\overline{\bb}}
  = C\mathscr{L}_{\bb} C^{-1}
  \,,
\end{equation}
where
$$
C: L^2(\R^2) \to L^2(\R^2),\qquad (C\psi)(x_{1},x_{2})=\overline{\psi(-x_{1},x_{2})}\,.$$
Consequently, $\mathscr{L}_{\bb}$ is self-adjoint if and only if $\bb$ is real.  
\end{proposition}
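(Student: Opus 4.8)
The plan is to establish the structural properties first, then prove the two domain inclusions $\mathscr{L}_{\bb}^{*}\subseteq\mathscr{L}_{\overline{\bb}}$ and $\mathscr{L}_{\overline{\bb}}\subseteq\mathscr{L}_{\bb}^{*}$ separately, and finally deduce the self-adjointness criterion. Dense definiteness is immediate, since $C_{c}^{\infty}(\R^{2})\subseteq\operatorname{Dom}(\mathscr{L}_{\bb})$ and $C_{c}^{\infty}(\R^{2})$ is dense in $L^{2}(\R^{2})$. Closedness is the standard argument for a maximal operator: a differential operator with polynomial coefficients is continuous on $\mathcal{D}'(\R^{2})$, so if $\psi_{n}\to\psi$ and $\mathscr{L}_{\bb}\psi_{n}\to g$ in $L^{2}(\R^{2})$, then $\mathscr{L}_{\bb}\psi_{n}\to\mathscr{L}_{\bb}\psi$ in $\mathcal{D}'(\R^{2})$, whence $\mathscr{L}_{\bb}\psi=g\in L^{2}(\R^{2})$. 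For the conjugation identity in \eqref{Csa} I would observe that $C$ is an antilinear isometric involution ($C^{-1}=C$) and then compute at the level of differential expressions: by the chain rule and antilinearity, $C$ commutes with $-i\partial_{x_{1}}$, while it anticommutes with $-i\partial_{x_{2}}$ and with multiplication by $x_{1}$, so $C\,(-i\partial_{x_{2}}-\bb\,x_{1})\,C^{-1}=-(-i\partial_{x_{2}}-\overline{\bb}\,x_{1})$ and the sign disappears upon squaring. This gives $C\,(-i\partial_{x_{1}})^{2}C^{-1}=(-i\partial_{x_{1}})^{2}$ and $C\,(-i\partial_{x_{2}}-\bb\,x_{1})^{2}C^{-1}=(-i\partial_{x_{2}}-\overline{\bb}\,x_{1})^{2}$, hence $C\mathscr{L}_{\bb}C^{-1}=\mathscr{L}_{\overline{\bb}}$. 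Since $C$ is a bijection of $L^{2}(\R^{2})$ and these identities hold distributionally, $C$ also maps $\operatorname{Dom}(\mathscr{L}_{\bb})$ bijectively onto $\operatorname{Dom}(\mathscr{L}_{\overline{\bb}})$; in particular $\mathscr{L}_{\overline{\bb}}$ is closed and densely defined, and the whole statement reduces to proving $\mathscr{L}_{\bb}^{*}=\mathscr{L}_{\overline{\bb}}$.

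The inclusion $\mathscr{L}_{\bb}^{*}\subseteq\mathscr{L}_{\overline{\bb}}$ is soft. Given $\phi\in\operatorname{Dom}(\mathscr{L}_{\bb}^{*})$ with $\eta:=\mathscr{L}_{\bb}^{*}\phi$, I would test the defining relation $\langle\mathscr{L}_{\bb}\psi,\phi\rangle=\langle\psi,\eta\rangle$ against arbitrary $\psi\in C_{c}^{\infty}(\R^{2})\subseteq\operatorname{Dom}(\mathscr{L}_{\bb})$; integrating by parts (no boundary terms, $\psi$ being compactly supported) and using that $\mathscr{L}_{\bb}$ and $\mathscr{L}_{\overline{\bb}}$ are formal adjoints with respect to the Hermitian $L^{2}$-pairing --- the routine bookkeeping $(-i\partial)^{\dagger}=-i\partial$ and $(\bb\,x_{1})^{\dagger}=\overline{\bb}\,x_{1}$ --- shows that the distribution $\mathscr{L}_{\overline{\bb}}\phi$ coincides with $\eta\in L^{2}(\R^{2})$. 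Hence $\phi\in\operatorname{Dom}(\mathscr{L}_{\overline{\bb}})$ and $\mathscr{L}_{\overline{\bb}}\phi=\eta$. This direction uses only $C_{c}^{\infty}(\R^{2})\subseteq\operatorname{Dom}(\mathscr{L}_{\bb})$.

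The reverse inclusion $\mathscr{L}_{\overline{\bb}}\subseteq\mathscr{L}_{\bb}^{*}$ is the heart of the matter and the step I expect to be the main obstacle. Fixing $\phi\in\operatorname{Dom}(\mathscr{L}_{\overline{\bb}})$, I must promote the Green-type identity $\langle\mathscr{L}_{\bb}\psi,\phi\rangle=\langle\psi,\mathscr{L}_{\overline{\bb}}\phi\rangle$ from $\psi\in C_{c}^{\infty}(\R^{2})$ to every $\psi\in\operatorname{Dom}(\mathscr{L}_{\bb})$, which is exactly the assertion that $\phi\in\operatorname{Dom}(\mathscr{L}_{\bb}^{*})$ with $\mathscr{L}_{\bb}^{*}\phi=\mathscr{L}_{\overline{\bb}}\phi$. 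In the self-adjoint case one would do this by noting that $C_{c}^{\infty}(\R^{2})$ is a genuine (norm) core, obtained by mollification and an expanding cutoff; but that tool is not available here, because $\mathscr{L}_{\bb}$ is not essentially self-adjoint and on its maximal domain the covariant first derivatives need not belong to $L^{2}(\R^{2})$. Instead I would invoke the weak core property established in Section~\ref{Subsec weak core}: for each $\psi\in\operatorname{Dom}(\mathscr{L}_{\bb})$ it provides a sequence $(\psi_{n})\subseteq C_{c}^{\infty}(\R^{2})$ such that $\psi_{n}\rightharpoonup\psi$ and $\mathscr{L}_{\bb}\psi_{n}\rightharpoonup\mathscr{L}_{\bb}\psi$ weakly in $L^{2}(\R^{2})$. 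Since $\langle\mathscr{L}_{\bb}\psi_{n},\phi\rangle=\langle\psi_{n},\mathscr{L}_{\overline{\bb}}\phi\rangle$ holds for every $n$ (as in the previous paragraph, now with $\mathscr{L}_{\overline{\bb}}\phi\in L^{2}(\R^{2})$), letting $n\to\infty$ on both sides yields the identity for $\psi$. Combined with the soft inclusion this gives $\mathscr{L}_{\bb}^{*}=\mathscr{L}_{\overline{\bb}}$, and with the first paragraph this completes \eqref{Csa}.

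Finally, $\mathscr{L}_{\bb}$ is self-adjoint if and only if $\mathscr{L}_{\bb}=\mathscr{L}_{\overline{\bb}}$. If $\bb\in\R$ this is trivially true. If $\bb\notin\R$, then $\mathscr{L}_{\bb}-\mathscr{L}_{\overline{\bb}}=-2(\bb-\overline{\bb})\,x_{1}(-i\partial_{x_{2}})+(\bb^{2}-\overline{\bb}^{2})\,x_{1}^{2}$ is a nonzero differential operator and therefore does not annihilate all of $C_{c}^{\infty}(\R^{2})$, so $\mathscr{L}_{\bb}\neq\mathscr{L}_{\overline{\bb}}$. Hence $\mathscr{L}_{\bb}$ is self-adjoint precisely when $\bb$ is real.
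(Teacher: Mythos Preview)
Your proposal is correct and follows essentially the same approach as the paper: both hinge on the weak-core result of Proposition~\ref{Prop Adjoint} to pass from the obvious identity on $C_c^\infty(\R^2)$ to all of $\operatorname{Dom}(\mathscr{L}_\bb)$, and both treat closedness, dense definiteness, and the self-adjointness criterion in the same elementary way. The only differences are organizational --- you establish the conjugation identity $C\mathscr{L}_\bb C^{-1}=\mathscr{L}_{\overline{\bb}}$ first via explicit commutator bookkeeping and then reduce to $\mathscr{L}_\bb^*=\mathscr{L}_{\overline{\bb}}$, whereas the paper proves $\mathscr{L}_\bb^*=\mathscr{L}_{\overline{\bb}}$ first (packaging both inclusions into a single ``equivalence'' via the weak core) and derives the $C$-self-adjointness afterwards; also, Proposition~\ref{Prop Adjoint} actually gives $\psi_n\to\psi$ strongly (not merely weakly), though your weaker statement suffices for the limit argument.
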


Since~$\mathscr{L}_{\bb}$ is closed, 
the spectrum can be decomposed as
 \[ 
 \operatorname{Spec}(\mathscr{L}_{\bb})
 = \operatorname{Spec}_{\textup{p}}(\mathscr{L}_{\bb}) 
 \cup \operatorname{Spec}_{\textup{c}}(\mathscr{L}_{\bb})
 \cup \operatorname{Spec}_{\textup{r}}(\mathscr{L}_{\bb})
 \,,
 \]
where the disjoint sets on the right-hand side denote 
the point, continuous and residual spectra, respectively 
(to recall the standard definitions, see Section~\ref{Sec.notation}).
The relationship~\eqref{Csa} reveals that~$\mathscr{L}_{\bb}$ 
is complex-self-adjoint with respect to the conjugation~$C$.
Consequently (see \cite[Prop.~1]{Camara-Krejcirik23}),
its residual spectrum is always empty:
$\operatorname{Spec}_{\textup{r}}(\mathscr{L}_{\bb})=\emptyset$.  
 
By taking the partial Fourier transform~$\mathcal{F}$ in the $x_2$-variable
and the complex change of variable $y_1 := x_1 - \bb^{-1} \xi_2$ 
where $\xi_2 = \mathcal{F} (-i\partial_{x_2}) \mathcal{F}^{-1}$,
the operator~$\mathscr{L}_{\bb}$ is formally similar to 
the complex-rotated harmonic oscillator 
(originally due to~\cite{Davies_1999a}
and recently revised in~\cite{Arnal-Siegl_2023})
\begin{equation}\label{Davies}
  -\partial_{y_{1}}^2 + \bb^2 \, y_1^2
  \qquad \mbox{in} \qquad 
  L^2(\R^2)
  \,.
\end{equation}
Of course, this procedure is purely formal (unless~$\bb$ is real), 
but it naturally leads to the definition of \emph{complex Landau levels}
\begin{equation}\label{Landau levels}
\Lambda_{\bb}= \{\pm (2k+1)\bb: k\in \N_{0}\}\, \qquad \text{if }\pm\Re \bb> 0
\,, 
\end{equation}
being the eigenvalues of~\eqref{Davies}.
Also, it is clear that the eigenvalues are infinitely degenerate
(for the variable~$y_2$ is missing in the action of~\eqref{Davies}).
 
It is well known that 
$
  \operatorname{Spec}(\mathscr{L}_{\bb}) 
  = \operatorname{Spec}_{\textup{c}}(\mathscr{L}_{\bb})
  = [0,+\infty)
$
if $\bb = 0$. 
At the same time, 
$
  \operatorname{Spec}(\mathscr{L}_{\bb}) 
  = \operatorname{Spec}_{\textup{p}}(\mathscr{L}_{\bb})
  = \Lambda_{\bb}
$
if $\bb \in \R \setminus \{0\}$. 
In the following theorem, 
we characterize the spectrum in the unexplored situations.

				\begin{theorem}\label{Theo Spectrum}
				\begin{enumerate}[label=\textup{(\roman*)}]
					\item \label{b complex} When $\bb \in \C\setminus \left(\R \cup i\R\right)$, the spectrum is the whole complex plane, including the complex Landau levels as the only eigenvalues:
					\[ \operatorname{Spec}(\mathscr{L}_\bb) =\C, \qquad\operatorname{Spec}_{\textup{c}}(\mathscr{L}_\bb)=\C\setminus \Lambda_{\bb},\qquad \operatorname{Spec}_{\textup{p}}(\mathscr{L}_\bb)= \Lambda_{\bb}\,.\]
					\item \label{b imaginary} When $\bb \in i\R \setminus \{0\} $, 
					the spectrum is the whole complex plane and contains no eigenvalues:
					\[
						\operatorname{Spec}(\mathscr{L}_\bb)=\C,\qquad \operatorname{Spec}_{\textup{c}}(\mathscr{L}_\bb)= \C, \qquad \operatorname{Spec}_{\textup{p}}(\mathscr{L}_\bb)=\emptyset\,.
					\]
					
				\end{enumerate}
				In both cases, all types of essential spectra are identical to the spectrum
				\[ \operatorname{Spec}_{\textup{ess,k}}(\mathscr{L}_\bb)=\operatorname{Spec}(\mathscr{L}_\bb),\qquad \forall \textup{k} \in \{1,\dots,5\} \,.\]
			\end{theorem}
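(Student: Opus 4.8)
The plan is to treat the two regimes separately but with a common backbone: establish that $\mathbb{C}\setminus\Lambda_{\bb}$ (resp.\ all of $\mathbb{C}$ when $\bb\in i\R\setminus\{0\}$) lies in the spectrum by constructing singular sequences, then pin down the point spectrum by a direct ODE analysis of the formal reduction~\eqref{Davies}, and finally upgrade everything to the equality of all five essential spectra.

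\medskip

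\emph{Spectrum covers $\mathbb{C}$ via Weyl singular sequences.}
First I would show $\operatorname{Spec}_{\textup{ess}}(\mathscr{L}_{\bb})=\mathbb{C}$ by exhibiting, for every $\lambda\in\mathbb{C}$, a sequence $(\psi_{n})\subset\operatorname{Dom}(\mathscr{L}_{\bb})$ with $\|\psi_{n}\|=1$, $\psi_{n}\rightharpoonup 0$, and $\|(\mathscr{L}_{\bb}-\lambda)\psi_{n}\|\to 0$. The natural ansatz is a WKB/coherent-state wave packet concentrated at scale going to infinity in the $x_{2}$ direction: taking $\psi(x_{1},x_{2})=e^{i\xi_{2}x_{2}}f(x_{1})\chi(x_{2}/R)$ and using the formal conjugation $y_{1}=x_{1}-\bb^{-1}\xi_{2}$, the profile $f$ should solve (approximately) $-f''+\bb^{2}y_{1}^{2}f=\lambda f$ after the complex translation. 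Because $\bb\notin\R$, the complex shift $\bb^{-1}\xi_{2}$ has a nontrivial imaginary part, so as $|\xi_{2}|\to\infty$ along a suitable ray the shifted Gaussian-type profile $f$ can be tuned to make $-f''+\bb^{2}x_{1}^{2}f-\lambda f$ small in $L^{2}$; the cutoff $\chi(x_{2}/R)$ with $R\to\infty$ makes the error from $\partial_{x_{2}}^{2}$ acting on the cutoff vanish and forces weak convergence to $0$. One must be slightly careful that the members genuinely lie in the maximal domain, but since they are Schwartz functions this is immediate. This argument is insensitive to whether $\Re\bb=0$, hence handles both cases~\ref{b complex} and~\ref{b imaginary} at once and shows $\operatorname{Spec}(\mathscr{L}_{\bb})=\mathbb{C}$ as well as $\operatorname{Spec}_{\textup{ess,k}}(\mathscr{L}_{\bb})=\mathbb{C}$ for all $\textup{k}$ (using that the largest essential spectrum cannot exceed $\mathbb{C}$ and the smallest one, $\operatorname{Spec}_{\textup{ess,1}}$, already equals $\mathbb{C}$ by the singular-sequence criterion, together with emptiness of the residual spectrum noted before the theorem).

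\medskip

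\emph{Identification of the point spectrum.}
Next I would compute $\operatorname{Spec}_{\textup{p}}(\mathscr{L}_{\bb})$ rigorously. Suppose $\mathscr{L}_{\bb}\psi=\lambda\psi$ with $\psi\in L^{2}(\R^{2})\setminus\{0\}$. Taking the partial Fourier transform $\mathcal{F}$ in $x_{2}$ turns the equation into $\bigl(-\partial_{x_{1}}^{2}+(\xi_{2}-\bb x_{1})^{2}\bigr)\widehat{\psi}(\cdot,\xi_{2})=\lambda\,\widehat{\psi}(\cdot,\xi_{2})$ for a.e.\ $\xi_{2}$, i.e.\ a complex-shifted harmonic oscillator in $x_{1}$ with shift $\bb^{-1}\xi_{2}$. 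For each fixed $\xi_{2}$ the only $L^{2}(\R_{x_{1}})$ solutions are the (complex-rotated) Hermite functions, which exist precisely when $\lambda\in\Lambda_{\bb}$; this requires analytic-continuation/Combes--Thomas-type control of the rotated oscillator, for which I would invoke the analysis of~\eqref{Davies} in~\cite{Davies_1999a,Arnal-Siegl_2023}. When $\Re\bb\neq 0$ (case~\ref{b complex}), for $\lambda\in\Lambda_{\bb}$ the corresponding eigenfunction in $x_{1}$ is Gaussian-decaying uniformly in $\xi_{2}$, so any $g\in L^{2}(\R_{\xi_{2}})$ yields a genuine $L^{2}(\R^{2})$ eigenfunction, giving infinite multiplicity and $\Lambda_{\bb}\subseteq\operatorname{Spec}_{\textup{p}}$; conversely $\lambda\notin\Lambda_{\bb}$ forces $\widehat{\psi}(\cdot,\xi_{2})=0$ for a.e.\ $\xi_{2}$, hence $\psi=0$. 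When $\bb\in i\R\setminus\{0\}$ (case~\ref{b imaginary}), the rotation angle is $\pm\pi/2$, the "eigenfunctions" of the formal operator are the non-decaying $e^{\pm i|\bb|x_{1}^{2}/2}\times(\text{Hermite})$ and fail to be in $L^{2}$; more precisely the half-plane condition $\pm\Re\bb>0$ in~\eqref{Landau levels} is vacuous, so $\Lambda_{\bb}=\emptyset$ and a Paley--Wiener / support argument shows no nonzero $\psi$ can satisfy the equation, giving $\operatorname{Spec}_{\textup{p}}=\emptyset$.

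\medskip

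\emph{Assembling.}
Finally, combining the two bullets: $\operatorname{Spec}(\mathscr{L}_{\bb})=\mathbb{C}$, $\operatorname{Spec}_{\textup{r}}(\mathscr{L}_{\bb})=\emptyset$, and the computed $\operatorname{Spec}_{\textup{p}}$ force $\operatorname{Spec}_{\textup{c}}(\mathscr{L}_{\bb})=\mathbb{C}\setminus\operatorname{Spec}_{\textup{p}}(\mathscr{L}_{\bb})$, which is $\mathbb{C}\setminus\Lambda_{\bb}$ in case~\ref{b complex} and $\mathbb{C}$ in case~\ref{b imaginary}; the essential-spectra statement follows since all five variants are squeezed between $\operatorname{Spec}_{\textup{ess,1}}=\mathbb{C}$ and the full spectrum $\mathbb{C}$. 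I expect the \textbf{main obstacle} to be the rigorous justification of the singular-sequence construction across \emph{all} of $\mathbb{C}$ while keeping the elements in the maximal (non-self-adjoint) domain and ensuring the weak convergence to zero: the complex shift makes naive Gaussian packets grow, so the widths and the ray along which $\xi_{2}\to\infty$ must be chosen carefully, and one must control the non-self-adjoint error terms uniformly; the point-spectrum part is comparatively standard once the complex harmonic oscillator~\eqref{Davies} is invoked, but the borderline purely-imaginary case needs a genuinely separate non-existence argument rather than a limiting one.
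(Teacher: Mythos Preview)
Your high-level plan---Weyl sequences for $\operatorname{Spec}=\C$, ODE analysis for the point spectrum, then assemble---matches the paper's architecture, and you correctly identify the singular-sequence construction as the crux. However, two of your shortcuts are genuine gaps.

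\textbf{The Weyl construction is not uniform across cases (i) and (ii).} Your ansatz $e^{i\xi_2 x_2}f(x_1)\chi(x_2/R)$ with a ``Gaussian-type profile $f$'' does not suffice. For generic $\lambda\in\C$ there is \emph{no} $L^2$ solution of $-f''+\bb^2 y^2 f=\lambda f$, so one needs an approximate solution. After partial Fourier transform the paper uses, in case~(i), the explicit parabolic-cylinder-type profile $u=e^{-Z^2/4}Z^{\lambda/(2\bb)-1/2}$ with $Z=\sqrt{2\bb}(x-\xi_2/\bb)$, which satisfies the equation up to a $C_{\lambda,\bb}Z^{-2}u$ remainder; the point is that on the carefully chosen support $x\approx \xi_2/\cos\theta$, $|Z|\approx n$ so this remainder is small, while $|u|^2$ grows like $e^{p(t_\theta)\xi_2^2}$ with $p(t_\theta)=\sin^2\theta/\cos\theta>0$. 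A Laplace-method estimate then beats the cutoff errors. This construction collapses as $\theta\to\pi/2$ (the exponent $p(t_\theta)$ blows up and the support degenerates), so your claim that ``this argument is insensitive to whether $\Re\bb=0$'' is false. For $\bb=i$ the paper instead performs a gauge change $e^{-ix_1^2/2}$ and a \emph{full} Fourier transform, which reduces $\mathscr{L}_i$ to the \emph{first-order} operator $2(i\xi_1+\xi_2)\partial_{\xi_1}+\xi_1^2+\xi_2^2+i$; both the Weyl sequence and the emptiness of the point spectrum then come from explicitly solving this first-order ODE. Your ``Paley--Wiener/support argument'' is not what is used and it is unclear how it would apply.

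\textbf{The converse inclusion $\operatorname{Spec}_{\textup{p}}\subset\Lambda_\bb$ is handled differently.} Your fiber-wise claim ``for a.e.\ $\xi_2$ the shifted oscillator has no $L^2$ eigenfunction unless $\lambda\in\Lambda_\bb$'' is plausible but requires controlling the asymptotics of parabolic cylinder functions along the real axis after a complex shift, which you do not address. The paper avoids this entirely: it exhibits eigenfunctions $h_{k\ell}(x,\xi_2)=\psi_k(\sqrt{\bb}(x-\xi_2/\bb))\psi_\ell(\xi_2/\sqrt{\cos\theta})$ of both $\widehat{\mathscr{L}}_\bb$ and $\widehat{\mathscr{L}}_\bb^{\,*}$, proves that $(h_{k\ell})$ is total in $L^2(\R^2)$, and then a one-line duality argument (pairing $(\widehat{\mathscr{L}}_\bb-\lambda)\psi$ against $\overline{h_{k\ell}}$) forces $\lambda\in\Lambda_\bb$. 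Finally, on the essential spectra: the singular-sequence criterion gives $\operatorname{Spec}_{\textup{ess,2}}=\C$, not $\operatorname{Spec}_{\textup{ess,1}}$; the paper gets equality of $\operatorname{Spec}_{\textup{ess,k}}$ for $k=1,\dots,4$ from complex-self-adjointness and then squeezes $\operatorname{Spec}_{\textup{ess,5}}$ between $\operatorname{Spec}_{\textup{ess,4}}$ and $\operatorname{Spec}$.
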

			
Since $\mathscr{L}_{\bb}$ is complex-self-adjoint, it follows that all the sets $\operatorname{Spec}_{\textup{ess,k}}(\mathscr{L}_{\bb})$ are identical for 
$k\in \{1,\dots,4\}$ 
(cf.~\cite[Thm.~9.1.6(ii)]{Edmunds-Evans18}). In the proof of Theorem~\ref{Theo Spectrum}, 
we demonstrate that $\operatorname{Spec}_{\textup{ess,2}}(\mathscr{L}_{\bb})=\C$, which implies the equivalence of the remaining essential spectra, including the fifth one. Illustrations of the spectra for selected values of $\bb$ are presented in Figure \ref{Figure Spec}. 

From Theorem~\ref{Theo Spectrum}, it is evident that the numerical range of $\mathscr{L}_{\bb}$ encompasses the entire complex plane when $\bb$ is not real. This explains why~$\mathscr{L}_{\bb}$ can not be realized as an m-sectorial operator, as discussed in \cite[Ex.~1]{KNR24}. Nevertheless, an intriguing observation is that the point spectrum always resides within the right-hand half-plane
(unless~$\bb$ is purely imaginary).

We emphasize that the established results are obtained 
for a special choice of the vector potential $\A(x)=\bb \,  (0,x_1)$
in~\eqref{operator}
generating the constant magnetic field $\curl \A = \bb$. 
Unlike the self-adjoint case (when $\bb$ is real), 
the results do not automatically extend to other choices of~$\A$ 
satisfying $\curl \A = \bb$.
This is due to the lack of gauge invariance in the non-self-adjoint setting.
It is interesting to explore how the spectrum appears 
for other choices of the magnetic potential~$\A$ satisfying $\curl\A = \bb$,
in particular for  the transverse gauge 
$\A(x)= \frac{1}{2} \bb \, (-x_{2},x_{1})$.

\subsection{General notations}\label{Sec.notation}
Let us fix some notations employed throughout the paper.
\begin{enumerate}[label=\textup{(\arabic*)}]
\item The inner product on $L^2(\R^2)$ is denoted by $\langle \cdot, \cdot \rangle$ and we use $\Vert \cdot \Vert$ for $L^2$-norm.
\item The characteristic function of any subset $E$ of $\R$ is denoted by $\mathds{1}_{E}$.
\item \label{Not Spec} For a linear operator $\mathscr{L}$, we denote its kernel, range and spectrum, respectively, by $\operatorname{Ker}(\mathscr{L})$, $\operatorname{Ran}(\mathscr{L})$ and $\operatorname{Spec}(\mathscr{L})$. 
The point, continuous and residual spectra are, respectively, defined by
\begin{align*}
&\operatorname{Spec}_{\textup{p}}(\mathscr{L})=  \{z\in \C: \mathscr{L}-z \text{ is not injective} \}\,,\\
&\operatorname{Spec}_{\textup{c}}(\mathscr{L})=  \{z\in \C: \mathscr{L}-z \text{ is injective and } \overline{\textup{Ran}(\mathscr{L}-z)}=\mathcal{H} \text{ and } \textup{Ran}(\mathscr{L}-z)\subsetneq \mathcal{H} \} \,, \\
&\operatorname{Spec}_{\textup{r}}(\mathscr{L})=  \{z\in \C: \mathscr{L}-z \text{ is injective and } \overline{\textup{Ran}(\mathscr{L}-z)} \subsetneq \mathcal{H} \}
\,.
\end{align*}
We also recall here five types of essential spectra of a closed operator $\mathscr{L}$ spectra 
as defined in \cite[Sec.~5.4.2]{Krejcirik-Siegl15}:
\begin{align*}
&\operatorname{Spec}_{\textup{ess,1}}(\mathscr{L})= \C\setminus \{z\in \C: \mathscr{L}-z \text{ is semi-Fredholm} \}\,,\\
&\operatorname{Spec}_{\textup{ess,2}}(\mathscr{L})=  \C \setminus \{z\in \C: \operatorname{Ran}(\mathscr{L}-z) \text{ is closed and} \operatorname{dim}\operatorname{Ker}\left(\mathscr{L}-z\right) \text{ is finite} \}\,,\\
&\operatorname{Spec}_{\textup{ess,3}}(\mathscr{L})= \C\setminus \{z\in \C: \mathscr{L}-z \text{ is Fredholm}\}\,,\\
&\operatorname{Spec}_{\textup{ess,4}}(\mathscr{L})= \C\setminus \{z\in \C: \mathscr{L}-z \text{ is Fredholm with index 0}\}\,,\\
&\operatorname{Spec}_{\textup{ess,5}}(\mathscr{L})= \C\setminus \left\{z\in \C: z \text{ is an isolated eigenvalue with finite algebraic multiplicity} \right.\\
&\hspace{5.2cm}\left. \text{and }\operatorname{Ran}(\mathscr{L}-z) \text{ is closed}  \right\}\,.
\end{align*}
\item For the multi-valued exponential function $z^c$ where $z,c\in \C$, we choose its principal value and still denote it as $z^c$, \emph{i.e.},
				$ z^{c} = e^{c \textup{ Log }z},$
where $\textup{Log }z = \log |z| + i \textup{ Arg }z$. Then, we have
				\begin{equation}\label{Power Modul}
					|z^c| = e^{-(\operatorname{Arg} z)(\Im c )} |z|^{\Re c}\,.
				\end{equation}
\item For two real-valued functions $a$ and $b$, we write $a\lesssim b$ (respectively, $a\gtrsim b$) instead of $a\leq C b$ (respectively, $a \geq C b$) for an insignificant constant $C>0$. We write $a\approx b$ when $a\lesssim b$ and $a\gtrsim b$.
\item The Fourier transform is given by
\begin{equation}\label{Fourier}
\mathcal{F}_{x\mapsto \xi}(\psi)(\xi) = \frac{1}{2\pi} \int_{\R^2} e^{-i x\cdot \xi} \psi(x) \dd x\,.
\end{equation}
The partial Fourier transform in the second variable is given by
\begin{equation}
\mathcal{F}_{x_{2}\mapsto \xi_{2}}(\psi)(\xi) = \frac{1}{\sqrt{2\pi}} \int_{\R} e^{-i x_{2} \xi_2} \psi(x)\, \dd x_{2}\,.\label{Partial Fourier 2}
\end{equation}
They are unitary on $L^2(\R^2)$.
\end{enumerate}
\subsection{Structure of the paper}	
The paper is organized as follows: In Section~\ref{Sec CSA}, we show that the operator $\mathscr{L}_{\bb}$ is complex-self-adjoint and present several equivalent spectra for operators corresponding to different values of $\bb$. The spectral analysis of the operator is investigated in 
Sections~\ref{Sec Complex b} and~\ref{Sec Imag b} 
as regards the cases~(i) and~(ii) of Theorem~\ref{Theo Spectrum}, respectively.

\section{Definition of the magnetic Laplacian}\label{Sec CSA}
%
The main objectives of this section are to prove Propositions \ref{Prop Adjoint} and to demonstrate that the analysis for all $\bb$ can be reduced to $\bb$ lying on the first-quadrant arc of the unit circle. 

As usual, we understand the action of $\mathscr{L}_\bb$
in~\eqref{Operator} in the sense of distribution. 
It means that $\psi\in \operatorname{Dom}(\mathscr{L}_\bb)$ if and only if $\psi\in L^2(\R^2)$ and there exists $f\in L^2(\R^2)$ such that
\begin{equation}\label{Distribution}
\left\langle \psi, \left[(-i\partial_{x_{1}})^2 + (-i\partial_{x_{2}}-\overline{\bb} x_{1})^2\right]\varphi \right\rangle = \left\langle f, \varphi \right\rangle\qquad \forall \varphi \in C_{c}^{\infty}(\R^2)\,,
\end{equation}
and we denote $\mathscr{L}_{\bb}\psi =f$.

\subsection{A weak core result}\label{Subsec weak core}
In order to find the adjoint of the maximal operator $\mathscr{L}_{\bb}$, we need the following \enquote{weak core} result.
			\begin{proposition}\label{Prop Adjoint}
			Let $\bb\in \C$. For any $\psi\in \operatorname{Dom}(\mathscr{L}_{\bb})$, 
			there exists a sequence $(\psi_{n}) \subset C_{c}^{\infty}(\R^2)$ such that $\psi_{n}\xrightarrow[n\to +\infty]{} \psi $ in $L^2(\R)$ and  $\mathscr{L}_{\bb}\psi_{n} \xrightarrow[n\to +\infty]{} \mathscr{L}_{\bb} \psi$ weakly in $L^2(\R^2)$.
			
			
			\end{proposition}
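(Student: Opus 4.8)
The plan is to regularize a given $\psi \in \operatorname{Dom}(\mathscr{L}_{\bb})$ by two successive operations: a smooth cutoff to make it compactly supported, and a mollification to make it smooth, and then to verify the two modes of convergence claimed — strong $L^2$ convergence of the functions themselves, but only \emph{weak} $L^2$ convergence of their images under $\mathscr{L}_{\bb}$. The weakness of the second convergence is exactly the point: since $\mathscr{L}_{\bb}$ is not essentially self-adjoint on $C_c^\infty(\R^2)$, one cannot expect the covariant first derivatives of $\psi$ to lie in $L^2$, so the usual commutator estimates that would give strong convergence of $\mathscr{L}_{\bb}\psi_n$ are unavailable. Instead, I would exploit that weak convergence follows from (a) boundedness of $(\mathscr{L}_{\bb}\psi_n)$ in $L^2$ together with (b) convergence tested against the dense set $C_c^\infty(\R^2)$, using the distributional identity~\eqref{Distribution}.

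Concretely, first I would fix a standard mollifier $\rho_\varepsilon$ and a cutoff $\chi_R(x) = \chi(x/R)$ with $\chi \in C_c^\infty(\R^2)$, $\chi \equiv 1$ near the origin. Set $\psi_{R,\varepsilon} := \rho_\varepsilon * (\chi_R \psi) \in C_c^\infty(\R^2)$. Strong $L^2$ convergence $\psi_{R,\varepsilon} \to \psi$ as $R \to \infty$, $\varepsilon \to 0$ is routine. For the images, write $\mathscr{L}_{\bb}\psi_{R,\varepsilon}$ and expand using Leibniz: $\mathscr{L}_{\bb}(\chi_R \psi) = \chi_R \mathscr{L}_{\bb}\psi + [\mathscr{L}_{\bb}, \chi_R]\psi$, where the commutator is a first-order differential operator in $\psi$ with coefficients supported in the annulus $\{R \lesssim |x| \lesssim 2R\}$ and of size $O(R^{-1})$ in the top-order part. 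The danger is that this commutator involves $\partial_{x_1}\psi$ and $(-i\partial_{x_2} - \bb x_1)\psi$, which need not be globally $L^2$; however, because these appear multiplied by the rapidly shrinking cutoff derivatives $\nabla\chi_R$, I would need a local regularity / local energy estimate: on any fixed bounded set the function $\psi$, being in the domain of an elliptic operator, does enjoy $H^1_{\mathrm{loc}}$ (indeed $H^2_{\mathrm{loc}}$) bounds by interior elliptic regularity, and these are enough to control the commutator term — but crucially one must track how the local $H^1$ norm on the annulus grows with $R$. This is where the genuine work lies: one must show $\| [\mathscr{L}_{\bb},\chi_R]\psi \|$ stays bounded (not necessarily vanishing) as $R \to \infty$, which plausibly follows from $\|\psi\|_{L^2} + \|\mathscr{L}_{\bb}\psi\|_{L^2} < \infty$ via a Caccioppoli-type inequality that trades the first-order term on the annulus against $\psi$ and $\mathscr{L}_{\bb}\psi$ on a slightly larger annulus, with $R$-independent constants.

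Granting a uniform bound $\sup_{R,\varepsilon}\|\mathscr{L}_{\bb}\psi_{R,\varepsilon}\| < \infty$, weak convergence is then cheap: for any $\varphi \in C_c^\infty(\R^2)$, once $R$ is large enough that $\operatorname{supp}\varphi \subset \{\chi_R = 1\}$, one has $\langle \mathscr{L}_{\bb}\psi_{R,\varepsilon}, \varphi\rangle = \langle \rho_\varepsilon * (\chi_R \psi), \mathscr{L}_{\overline{\bb}}\varphi\rangle \to \langle \psi, \mathscr{L}_{\overline{\bb}}\varphi\rangle = \langle \mathscr{L}_{\bb}\psi, \varphi\rangle$ by~\eqref{Distribution} (the mollifier passes onto $\varphi$ and the cutoff disappears). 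Since $C_c^\infty(\R^2)$ is dense in $L^2(\R^2)$ and the net is bounded, this testing against a dense set upgrades to weak convergence against all of $L^2(\R^2)$. Finally I would extract a sequence: choose $R = R_n \to \infty$ and $\varepsilon = \varepsilon_n \to 0$ suitably (e.g. so that $\|\psi_{R_n,\varepsilon_n} - \psi\| \to 0$ and the $\mathscr{L}_{\bb}\psi_{R_n,\varepsilon_n}$ remain bounded), which yields the desired $(\psi_n) \subset C_c^\infty(\R^2)$. The main obstacle, to repeat, is the $R$-uniform control of the annulus commutator $[\mathscr{L}_{\bb},\chi_R]\psi$ using only membership of $\psi$ in $\operatorname{Dom}(\mathscr{L}_{\bb})$; everything else is bookkeeping.
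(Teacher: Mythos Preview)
Your approach is viable but genuinely different from the paper's, and the step you flag as the crux does need sharpening. You ask for a Caccioppoli inequality on the annulus ``with $R$-independent constants''; that is too optimistic, because the unbounded potential $\bb^2 x_1^2$ (and the first-order term $2i\bb x_1\partial_{x_2}$) forces the annular energy estimate to read
\[
\|\nabla\psi\|_{L^2(\{R\le|x|\le 2R\})}\ \lesssim\ R\,\|\psi\|_{L^2(\{R/2\le|x|\le 3R\})}+\big(\|\psi\|\,\|\mathscr{L}_{\bb}\psi\|\big)^{1/2},
\]
with a constant that \emph{grows} like $R$. Fortunately that factor is exactly cancelled by the $R^{-1}$ decay of $\nabla\chi_R$, so $\|\nabla\chi_R\cdot\nabla\psi\|_{L^2}\lesssim\|\psi\|_{L^2(\{R/2\le|x|\le 3R\})}\to 0$, and your uniform bound (in fact, strong vanishing) on $\|[\mathscr{L}_{\bb},\chi_R]\psi\|$ follows. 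With that in hand, your scheme---interior elliptic regularity giving $\psi\in H^2_{\mathrm{loc}}$, then diagonal choice of $(R_n,\varepsilon_n)$, then weak convergence via uniform boundedness plus testing against $C_c^\infty$---goes through.

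The paper proceeds differently. It reverses the order of regularization, setting $\psi_n=\chi_n(\rho_n*\psi)$, and splits $\mathscr{L}_{\bb}\psi_n-\mathscr{L}_{\bb}\psi$ into three pieces. The key piece is the mollifier commutator $J_n=\chi_n\big[\mathscr{L}_{\bb}(\rho_n*\psi)-\rho_n*(\mathscr{L}_{\bb}\psi)\big]$, and rather than estimate its $L^2$ norm, the paper pairs $J_n$ directly with a test function $\varphi\in C_c^\infty$, transfers the mollifier onto $\varphi$ via $\langle\rho_n*u,v\rangle=\langle u,\check\rho_n*v\rangle$, and then exploits $\|x_1^j\check\rho_n\|_{L^1}\lesssim n^{-j}$ to make the remainder vanish. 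In this way the paper never invokes elliptic regularity or any energy estimate on $\psi$; all derivatives land on the test function. Your route is more analytic and requires the (easy but nontrivial) $H^2_{\mathrm{loc}}$ input, whereas the paper's route is a pure duality argument that stays entirely at the level of the distributional identity~\eqref{Distribution}.
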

			\begin{proof}				
We consider two cut-off functions $\chi$ and $\rho$ in $\C_{c}^{\infty}(\R^2)$ such that 
\begin{itemize}
\item[$\bullet$] $0\leq \chi \leq 1$, $  \operatorname{Supp}\chi \subset B(0,1)$ and $\chi=1$ on $ B(0,\frac{1}{2})$,
\item[$\bullet$] $\rho\geq 0$, $  \operatorname{Supp}\rho \subset B(0,1)$ and $\int \rho(x)\, \dd x=1$.
\end{itemize}					 
We then define the usual \emph{expanding cut-offs} and \emph{shrinking mollifiers} as follows:
					\[\chi_{n}(\cdot) = \chi\left(\frac{\cdot}{n}\right)\,,\qquad \rho_{n}(\cdot) =  n^2\rho(n \cdot)\,.\]
For $\psi \in \operatorname{Dom}(\mathscr{L}_\bb)$, we will show that $\psi_{n}= \chi_{n}(\rho_{n} \ast \psi)$ satisfies our requirements.  It is known that $\psi_{n}\in C_{c}^{\infty}(\R^2)$ 
(cf.~\cite[Prop.~4.20]{Brezis11}) and $\psi_{n} \xrightarrow[n\to+\infty]{} \psi$ in $L^2(\R^2)$ (cf. \cite[Lem.~1.9]{CR21}). To prove that $ \mathscr{L}_{\bb} \psi_{n}$ converges weakly to $\mathscr{L}_{\bb} \psi$ in $L^2(\R^2)$, we write
					\begin{align*}
						\mathscr{L}_{\bb} \psi_{n}-\mathscr{L}_{\bb} \psi= I_{n}+J_{n} +K_{n}\,,
					\end{align*}
					where
					\begin{align*}
						I_{n}= &\mathscr{L}_{\bb} (\chi_{n} \rho_{n} \ast \psi)- \chi_{n} \mathscr{L}_{\bb}(\rho_{n} \ast \psi)\, ,\\
						J_{n}= & \chi_{n} \left[\mathscr{L}_{\bb} (\rho_{n} \ast \psi)- \rho_{n} \ast (\mathscr{L}_{\bb} \psi) \right]\,,\\
						K_{n} = & \chi_{n}(\rho_{n} \ast \mathscr{L}_{\bb} \psi)-\mathscr{L}_{\bb} \psi\,.
					\end{align*}
					As $\mathscr{L}_{\bb} \psi\in L^2(\R^2)$, 
					$K_n \xrightarrow[n\to+\infty]{}0$ in $L^2(\R^2)$. 
					Below, we evaluate $\mathscr{L}_{b}$ only for smooth functions, 
					using the expansion
					\begin{equation}\label{Expression Lb}
					\mathscr{L}_{\bb}= -\Delta +2i\bb\, x_{1} \partial_{x_{2}}+\bb^2 x_{1}^2\,.
					\end{equation}
					For $I_{n}$, using the chain rule, we have
					\[ I_{n}=\frac{1}{n^2} \Delta \chi \left(\frac{\cdot}{n}\right)(\rho_{n} \ast \psi)+2\frac{1}{n}\nabla \chi\left(\frac{x}{n}\right)\cdot \nabla (\rho_{n} \ast \psi)\,.\]
					Since $\operatorname{Supp} (\rho_{n} \ast \psi) \subset \operatorname{Supp} \psi + \overline{B(0,1)} $ is fixed and the derivatives of 
					$x \mapsto \chi \left( \frac{x}{n} \right)$ vanish for $|x|\leq \frac{n}{2}$, it follows that $I_n=0$ for sufficiently large $n$.
					
					Now, we will show that $J_{n}$ converges weakly to zero. 
					Given $\eta\in C_{c}^{\infty}(\R^2)$, thanks to integration by parts and \cite[Prop.~4.16]{Brezis11}, we have
					\begin{equation}\label{Eq 1}
						\langle \mathscr{L}_{\bb} (\rho_{n} \ast \psi), \eta \rangle =  \left\langle \rho_{n} \ast \psi, \mathscr{L}_{\overline{\bb}} \eta\right\rangle=\left\langle  \psi, \check{\rho}_{n} \ast \mathscr{L}_{\overline{\bb}} \eta\right\rangle\,,
					\end{equation}
					where $\check{\rho}_n(x)=\rho_{n}(-x)$.\\
					By the linearity of convolution, we get
					\begin{align*}
						\check{\rho}_{n} \ast \mathscr{L}_{\overline{\bb}} \eta = \check{\rho}_{n} \ast (-\Delta \eta)  +2i \overline{\bb}\, \check{\rho}_{n} \ast (x_1 \partial_{x_{2}} \eta) +\overline{\bb}^2 \, \check{\rho}_{n} \ast (x_{1}^2 \eta)\,.
					\end{align*}
					From \cite[Prop.~4.20]{Brezis11}, which establishes the commutative property of mollifiers with the derivatives, and the definition of the convolution, we see that
					\begin{align*}
						\check{\rho}_{n} \ast (-\Delta \eta) = & -\Delta \left(\check{\rho}_{n} \ast \eta\right)\,,\\
						\check{\rho}_{n} \ast (x_1 \partial_{x_{2}}\eta) =& x_{1} \partial_{x_{2}} \left(\check{\rho}_{n} \ast \eta\right) - \left((x_{1} \check{\rho}_{n}) \ast \partial_{2}\eta\right)\,,\\
						\check{\rho}_{n} \ast (x_{1}^2 \eta)= & x_{1}^2  \left(\check{\rho}_{n} \ast\eta\right) - 2 x_{1}\left((x_{1} \check{\rho}_{n}) \ast \eta\right)+ \left((x_{1}^2 \check{\rho}_{n}) \ast \eta\right)\,.
					\end{align*}
					Combining these identities, we obtain
					\begin{equation}\label{Eq 2}
						\check{\rho}_{n} \ast \mathscr{L}_{\overline{\bb}}\eta = \mathscr{L}_{\overline{\bb}} \left(\check{\rho}_{n} \ast \eta\right) - R_{\eta}\,,
					\end{equation}
					where
					\[R_{\eta}= \overline{\bb}^2 \left((x_{1}^2 \check{\rho}_{n}) \ast \eta\right)- 2 \overline{\bb}^2  x_{1}\left((x_{1} \check{\rho}_{n}) \ast \eta\right) -2i\overline{\bb} \,\left((x_{1} \check{\rho}_{n}) \ast \partial_{2}\eta\right)\,. \]
					From \eqref{Eq 1} and \eqref{Eq 2}, we deduce that
					\begin{align*}
					\langle \mathscr{L}_{\bb} (\rho_{n} \ast \psi), \eta \rangle= &\langle \psi, \mathscr{L}_{\overline{\bb}} \left(\check{\rho}_{n} \ast \eta\right)\rangle -\langle \psi, R_{\eta} \rangle\\
					= &\langle \mathscr{L}_{\bb}\psi,  \check{\rho}_{n} \ast \eta \rangle -\langle \psi, R_{\eta} \rangle\\
					= &\langle \rho_{n}\ast \mathscr{L}_{\bb}\psi,  \eta \rangle -\langle \psi, R_{\eta} \rangle\,.
					\end{align*}
			Here, in the second equality, noting that $\check{\rho}_{n} \ast \eta \in C_{c}^{\infty}(\R^2)$, we employed \eqref{Distribution} and in the third equality, as $\mathscr{L}_{\bb}\psi \in L^2(\R^2)$, we applied \cite[Prop.~4.16]{Brezis11} again. Therefore, thanks to Cauchy-Schwarz inequality, we have, for every $\eta\in C_{c}^{\infty}(\R^2)$,
					\begin{equation*}
						\left\vert \langle \mathscr{L}_{\bb} (\rho_{n} \ast \psi)- \rho_{n} \ast \mathscr{L}_{\bb} \psi, \eta \rangle\right\vert\leq \Vert \psi \Vert \Vert R_{\eta} \Vert\,.
					\end{equation*}	
					By choosing $\eta=\chi_{n}\varphi$ for a fixed but arbitrary $\varphi\in \C_{c}^{\infty}(\R^2)$, we have
					\begin{equation}\label{Est Jn}
						\left\vert \langle J_{n}, \varphi \rangle\right\vert\leq \Vert \psi \Vert \Vert R_{\chi_{n}\varphi} \Vert\,.
					\end{equation}	
					Let us estimate each term in $R_{\chi_{n}\varphi}$. 
					By using Young's inequality, we have
					\begin{align*}
					&\Vert (x_{1}^2 \check{\rho}_{n}) \ast (\chi_{n}\varphi) \Vert\leq \Vert x_{1}^2 \check{\rho}_{n} \Vert_{L^1} \Vert \chi_{n}\varphi \Vert\leq \frac{1}{n^2} \Vert x_{1}^2 \rho \Vert_{L^1} \Vert \varphi \Vert\,,\\
					&\Vert x_{1}\left[(x_{1} \check{\rho}_{n}) \ast (\chi_{n}\varphi)\right] \Vert\leq C \Vert (x_{1} \check{\rho}_{n}) \ast (\chi_{n}\varphi)\Vert\leq C \Vert x_{1} \check{\rho}_{n} \Vert_{L^1} \Vert \varphi \Vert \leq \frac{C}{n} \Vert x_{1} \rho \Vert_{L^1} \Vert \varphi \Vert\,,\\
					&\Vert (x_{1} \check{\rho}_{n}) \ast \partial_{2}(\chi_{n}\varphi) \Vert \leq \Vert x_{1}  \check{\rho}_{n} \Vert_{L^1} \Vert \partial_{2}(\chi_{n}\varphi) \Vert\leq \frac{1}{n} \Vert x_{1}  \rho \Vert_{L^1} \Vert \partial_{2}(\chi_{n}\varphi) \Vert\leq \frac{1}{n} \Vert x_{1}  \rho \Vert_{L^1} \Vert \partial_{2}\varphi \Vert\,.
					\end{align*}
						As $\operatorname{Supp} \left[(x_{1} \check{\rho}_{n}) \ast (\chi_{n}\varphi)\right]\subset \overline{B(0,1)}+\operatorname{Supp}\varphi$ for $n$ large enough, the constant $C$ depends only on the support of $\varphi$. From these bounds and from \eqref{Est Jn}, we deduce that $J_{n}$ converges weakly to zero in $L^2(\R^2)$ and thus, $ \mathscr{L}_{\bb} \psi_{n}$ converges weakly to $\mathscr{L}_{\bb} \psi$ in $L^2(\R^2)$.
\end{proof}
Now, we turn to the main task of this section.
\begin{proof}[Proof of Proposition \ref{Prop Adjoint}]
Clearly, $C_{c}^{\infty}(\R^2) \subset \operatorname{Dom}(\mathscr{L}_\bb)$, and since  $C_{c}^{\infty}(\R^2)$ is dense in $L^2(\R^2)$, it follows that $\mathscr{L}_{\bb}$ is densely defined.

 To prove that $\mathscr{L}_{\bb}$ is closed, let $\psi_{n}\in \operatorname{Dom}(\mathscr{L}_\bb)$ be a sequence such that $\psi_{n}\xrightarrow[n\to+\infty]{}\psi$ and $\mathscr{L}_{\bb} \psi_{n}\xrightarrow[n\to+\infty]{} f$ in $L^2(\R^2)$. We need to show that $\psi\in  \operatorname{Dom}(\mathscr{L}_\bb)$ and $  \mathscr{L}_\bb \psi=f$. By \eqref{Distribution}, for all $\varphi\in C_{c}^{\infty}(\R^2)$, we have,
\[ \langle \psi, \mathscr{L}_{\overline{\bb}} \varphi \rangle = \lim_{n\to+\infty} \langle \psi_{n}, \mathscr{L}_{\overline{\bb}} \varphi \rangle =\lim_{n\to+\infty} \langle  \mathscr{L}_{\bb}\psi_{n}, \varphi \rangle = \langle f, \varphi \rangle\,. \]					
This implies that $\psi\in \operatorname{Dom}(\mathscr{L}_\bb)$ and $f=\mathscr{L}_{\bb} \psi$, as desired. Thus, $\mathscr{L}_\bb$ is closed.

To determine the adjoint of $\mathscr{L}_{\bb}$, let $g\in \operatorname{Dom}(\mathscr{L}_{\bb}^{*})$. By definition, $g\in L^2(\R^2)$ and there exists $g^{*}\in L^2(\R^2)$ such that
					\[ \langle \mathscr{L}_{\bb} \varphi, g \rangle = \langle \varphi, g^{*}\rangle,\qquad \forall \varphi\in \operatorname{Dom}(\mathscr{L}_{\bb})\,.\]	
					By Proposition \ref{Prop Adjoint}, this is equivalent to
						\[ \langle \mathscr{L}_{\bb} \varphi, g \rangle = \langle \varphi, g^{*}\rangle,\qquad \forall \varphi\in C_{c}^{\infty}(\R^2)\,,\]
						which implies that
						$$g\in \{\psi\in L^2(\R^2) : \left[(-i\partial_{x_{1}})^2 + (-i\partial_{x_{2}}-\overline{\bb} x_{1})^2\right]\psi\in L^2(\R^2)\}\,,$$
						and $\mathscr{L}_{\bb}^{*}g=g^{*}=\mathscr{L}_{\overline{\bb}}g$. Thus, the adjoint of $\mathscr{L}_{\bb}$ is $\mathscr{L}_{\bb}^{*}=\mathscr{L}_{\overline{\bb}}$.
						
						Now, suppose $\bb$ is real. Then $\overline{\bb}=\bb$, so $\mathscr{L}_{\bb}=\mathscr{L}_{\bb}^{*}$, showing that $\mathscr{L}_{\bb}$ is self-adjoint. Conversely, assume $\mathscr{L}_{\bb}=\mathscr{L}_{\bb}^{*}$. Take any real-valued function $\psi \in C_{c}^{\infty}(\R^2)$. Using the explicit form \eqref{Expression Lb} of $\mathscr{L}_{\bb}$, we have
						\[ \mathscr{L}_{\bb} \psi - \mathscr{L}_{\overline{\bb}}\psi = -4\bb_{2} x_{1}\left( \partial_{x_{2}}\psi-i\bb_{1} x_{1} \psi\right)\,,\]
						where $\bb=\bb_{1}+i\bb_{2}$ with $\bb_{1},\bb_{2}\in \R$. Since $x_{1}\left( \partial_{x_{2}}\psi-i\bb_{1} x_{1} \psi\right)$ is a nonzero function, this implies that $\bb_{2}=0$, so $\bb$ must be real.
						
						Finally, we show that $\mathscr{L}_{\bb}$ is $C$-self-adjoint. Let $\varphi \in C_{c}^{\infty}(\R^2)$. A straightforward calculation verifies that $\left(\mathscr{L}_\bb C\right) \varphi  = \left(C \mathscr{L}_{\overline{\bb}} \right)  \varphi$. For any $\psi\in \operatorname{Dom}(\mathscr{L}_{\bb}^{*})$, we have
						\[ \langle C\psi, \mathscr{L}_{\overline{\bb}} \varphi \rangle=\langle C\psi, C\mathscr{L}_{\bb} C\varphi \rangle=\langle \mathscr{L}_{\bb} C \varphi, \psi \rangle= \langle C\varphi, \mathscr{L}_{\bb}^{*} \psi \rangle= \langle C \mathscr{L}_{\bb}^{*} \psi, \varphi \rangle\,.\]
						Here, the second and fourth equalities follows from the properties $\langle Cu, v\rangle=\langle Cv,u\rangle$ for any $u,v\in L^2(\R^2)$ and $C^2=I$. Therefore,
						\[ \langle C\psi, \mathscr{L}_{\overline{\bb}} \varphi \rangle =\langle C \mathscr{L}_{\bb}^{*} \psi, \varphi \rangle\,,\qquad \forall \varphi \in C_{c}^{\infty}(\R^2)\,. \]
						This implies that $C\psi \in  \operatorname{Dom}(\mathscr{L}_{\bb})$ and $\mathscr{L}_{\bb} C \psi= C \mathscr{L}_{\bb}^{*} \psi$. Hence, $\mathscr{L}_{\bb}^{*} =C \mathscr{L}_{\bb} C$, proving that $\mathscr{L}_{\bb}$ is complex-self-adjoint.
\end{proof}

\subsection{Spectral reducibility}\label{Subsec Spectra Reduc}
Below, we outline the equivalence of the spectra of the operator $\mathscr{L}_{\bb}$ for different values of the parameter $\bb$. These equivalences highlight the symmetry properties and scaling behavior of $\mathscr{L}_{\bb}$ in relation to its defining parameters, providing insights into the spectral characteristics of the system. 
\begin{proposition}\label{Prop Spectra}
				Given $\bb\in \C$, we write $\bb= |\bb| e^{i \theta}$ where $\theta= \operatorname{Arg}\bb$. Then,
				\begin{enumerate}[label=\textup{(\roman*)}]
					\item \label{Scalling} $ \operatorname{Spec}_{\tau}(\mathscr{L}_{\bb})=|\bb|\operatorname{Spec}_{\tau}(\mathscr{L}_{e^{i\theta}})$\,,
					\item \label{Sym 1} $ \operatorname{Spec}_{\tau}(\mathscr{L}_{e^{i\theta}})=\operatorname{Spec}_{\tau}(\mathscr{L}_{-e^{i\theta}})$\,,
					\item \label{Sym 2}$ \operatorname{Spec}_{\tau}(\mathscr{L}_{e^{i\theta}})=\overline{\operatorname{Spec}_{\tau}(\mathscr{L}_{e^{-i\theta}})}$\,,
				\end{enumerate}
				where $\tau\in \left\{\,; \textup{r}; \textup{p}; \textup{c}; \textup{ess,2}; \textup{ess,5}\right\}$.
			\end{proposition}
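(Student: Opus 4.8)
The plan is to exhibit, for each of the three assertions, an explicit bounded invertible transformation that conjugates the relevant operator onto the other, so that the spectra (and all the finer spectral parts, since they are preserved under similarity) coincide up to the indicated scaling/conjugation. Throughout I work with the maximal realizations defined via \eqref{Distribution}; the key point is that each candidate transformation maps $C_c^\infty(\R^2)$ onto itself and intertwines the differential expressions, and then one checks it respects the domains characterized by \eqref{Distribution}.

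For \ref{Scalling}, the natural candidate is the $L^2$-unitary dilation $(U_\lambda\psi)(x_1,x_2) = \lambda\,\psi(\lambda x_1,\lambda x_2)$ with $\lambda = |\bb|^{1/2}$. Using the expansion \eqref{Expression Lb}, $\mathscr{L}_{\bb} = -\Delta + 2i\bb\,x_1\partial_{x_2} + \bb^2 x_1^2$, each of the three terms scales homogeneously: $-\Delta$ picks up $\lambda^2$, the first-order term picks up $\lambda^2$ times the factor $|\bb|$ hidden in $\bb = |\bb|e^{i\theta}$, and the potential term picks up $\lambda^{-2}\bb^2 = \lambda^2 |\bb|\,e^{2i\theta}\cdot(\lambda^{-4}\cdot\lambda^2\cdot\ldots)$ — more cleanly, one verifies directly that $U_\lambda^{-1}\mathscr{L}_{\bb}U_\lambda = \lambda^{2}\mathscr{L}_{\bb/\lambda^2}=|\bb|\,\mathscr{L}_{e^{i\theta}}$ on $C_c^\infty$, and since $U_\lambda$ preserves the maximal domain (it commutes with the distributional characterization), this similarity is genuine. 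Hence $\operatorname{Spec}_\tau(\mathscr{L}_{\bb}) = |\bb|\operatorname{Spec}_\tau(\mathscr{L}_{e^{i\theta}})$ for every $\tau$ in the list.

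For \ref{Sym 1}, I would use the parity operator $(P\psi)(x_1,x_2)=\psi(-x_1,-x_2)$, which is unitary and self-inverse. Replacing $x_1\mapsto -x_1$ and $\partial_{x_2}\mapsto -\partial_{x_2}$ in \eqref{Expression Lb} leaves $-\Delta$ and $\bb^2 x_1^2$ untouched and turns $2i\bb\,x_1\partial_{x_2}$ into $2i\bb\,(-x_1)(-\partial_{x_2}) = 2i\bb\,x_1\partial_{x_2}$, so $P^{-1}\mathscr{L}_{\bb}P=\mathscr{L}_{\bb}$; that is not yet what we want. The correct choice is the single-variable reflection $(P_1\psi)(x_1,x_2)=\psi(-x_1,x_2)$: then $x_1\mapsto -x_1$ but $\partial_{x_2}$ is unchanged, so the first-order term flips sign, the potential is unchanged, and one gets $P_1^{-1}\mathscr{L}_{\bb}P_1=\mathscr{L}_{-\bb}$; specializing $\bb=e^{i\theta}$ gives $\mathscr{L}_{e^{i\theta}}$ similar to $\mathscr{L}_{-e^{i\theta}}$. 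Again $P_1$ preserves both $C_c^\infty(\R^2)$ and the maximal domain, so the spectra and all listed parts agree.

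For \ref{Sym 2}, the natural candidate is complex conjugation $(J\psi)(x)=\overline{\psi(x)}$, which is antiunitary. Conjugating \eqref{Expression Lb}: $-\Delta$ is real, $\bb^2 x_1^2 \mapsto \overline{\bb}^2 x_1^2$, and $2i\bb\,x_1\partial_{x_2}\mapsto -2i\overline{\bb}\,x_1\partial_{x_2}$ — the wrong sign again, which I fix by combining $J$ with the reflection $P_1$ above, i.e.\ using the conjugation $C$ of Proposition \ref{Theo Adjoint}: $C^{-1}\mathscr{L}_{\bb}C = \mathscr{L}_{\overline{\bb}}$ by \eqref{Csa} — but that relates $\bb$ and $\overline\bb$, not $e^{i\theta}$ and $e^{-i\theta}$, which is precisely assertion \ref{Sym 2} with $\bb=e^{i\theta}$ (since $\overline{e^{i\theta}}=e^{-i\theta}$). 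Because $C$ is antilinear, a similarity via $C$ sends $\operatorname{Spec}_\tau$ to its complex conjugate, giving $\operatorname{Spec}_\tau(\mathscr{L}_{e^{i\theta}})=\overline{\operatorname{Spec}_\tau(\mathscr{L}_{e^{-i\theta}})}$; this is essentially a restatement of \eqref{Csa} at the level of spectra, using that $\operatorname{Spec}_\tau(\mathscr{L}^*)=\overline{\operatorname{Spec}_{\tau'}(\mathscr{L})}$ with $\tau'=\tau$ for the symmetric notions in our list.

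The main obstacle, and the only genuinely non-routine point, is the domain bookkeeping: one must confirm that each transformation $T\in\{U_\lambda,P_1,C\}$ maps $\operatorname{Dom}(\mathscr{L}_{\bb})$ \emph{onto} $\operatorname{Dom}(\mathscr{L}_{T\text{-image}})$, not merely into it, and that the intertwining established on $C_c^\infty(\R^2)$ propagates to the full maximal domain. For $U_\lambda$ and $P_1$ this is immediate from the distributional definition \eqref{Distribution}, since both are isometries commuting with the test-function pairing; for $C$ it is exactly the content already proved in Proposition \ref{Theo Adjoint}. Once the similarities are in place, invariance of $\operatorname{Spec}$, $\operatorname{Spec}_{\textup r}$, $\operatorname{Spec}_{\textup p}$, $\operatorname{Spec}_{\textup c}$, $\operatorname{Spec}_{\textup{ess},2}$ and $\operatorname{Spec}_{\textup{ess},5}$ under bounded invertible (or antilinear bijective isometric) maps is standard, with the scaling factor $|\bb|$ in \ref{Scalling} and the complex conjugation in \ref{Sym 2} entering in the obvious way.
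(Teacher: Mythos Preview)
Your proposal is correct and follows essentially the same approach as the paper: the paper establishes (i) via the unitary dilation $V_\bb\psi(x)=|\bb|^{1/2}\psi(|\bb|^{1/2}x)$ (your $U_\lambda$), (ii) via the reflection $S\psi(x_1,x_2)=\psi(-x_1,x_2)$ (your $P_1$), and (iii) via the antiunitary $C$ from Proposition~\ref{Theo Adjoint}, then invokes the standard invariance of the listed spectral parts under (anti-)unitary equivalence. Your additional attention to the domain bookkeeping via \eqref{Distribution} is a welcome refinement, though your parenthetical remark about $\operatorname{Spec}_\tau(\mathscr{L}^*)$ is unnecessary and slightly misleading (point and residual spectra swap under adjoints in general)---the antilinear similarity $C^{-1}\mathscr{L}_\bb C=\mathscr{L}_{\overline{\bb}}$ alone already gives (iii) directly.
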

			\begin{proof}
			The proof is a direct application of \cite[Prop.~5.5.1]{Krejcirik-Siegl15}, utilizing suitable (anti-)unitary operators. In particular, \ref{Scalling} and \ref{Sym 1} are deduced, respectively, from 
			$$V_{\bb}^{-1}\mathscr{L}_\bb V_\bb=|\bb|\mathscr{L}_{|\bb|^{-1}\bb},\qquad S^{-1}\mathscr{L}_\bb S=\mathscr{L}_{-\bb}\,, $$
			where $V_{\bb}$ and $S$ are unitary operators defined by
			\begin{align*}
			V_{\bb} &: L^2(\R^2) \to L^2(\R^2),\qquad V_\bb\psi(x)=|\bb|^{\frac12}\psi(|\bb|^{\frac12}x)\,,\\
			S &: L^2(\R^2) \to L^2(\R^2),\qquad  (S\psi)(x_{1},x_{2})=\psi(-x_{1},x_{2})\,.
			\end{align*}
				While \ref{Sym 2} is ensured from the complex-self-adjointness of $\mathscr{L}_{\bb}$, $\mathscr{L}_{\overline{\bb}}=C \mathscr{L}_{\bb} C^{-1}$.
			\end{proof}
			
As a consequence of	Proposition~\ref{Prop Spectra}, 
for $\bb\in \C \setminus \{0\}$,  the spectral analysis of $\mathscr{L}_{\bb}$ can be systematically reduced as follows:
\begin{enumerate}
\item[\textbf{1.}] \textbf{Scaling:} By Proposition \ref{Prop Spectra} \ref{Scalling}, it suffices to consider the case where $\bb$ lies on the unit circle, \emph{i.e.}, $\bb=e^{i\theta}$ with $\theta\in (-\pi,\pi]$.
\item[\textbf{2.}] \textbf{Symmetry:} Using Proposition \ref{Prop Spectra} \ref{Sym 1}, this further reduces the analysis to  $\bb=e^{i\theta}$ with $\theta \in \left(-\frac{\pi}{2}, \frac{\pi}{2}\right]$.
\item[\textbf{2.}] \textbf{Reflection Symmetry:} Finally, by Proposition \ref{Prop Spectra} \ref{Sym 2}, it suffices to restrict to the case $\bb=e^{i\theta}$ with $\theta\in \left[0,\frac{\pi}{2}\right]$.
\end{enumerate}
In summary, it is enough to analyze~$\mathscr{L}_{\bb}$ on the first-quadrant arc of the unit circle.

	\section{When the magnetic field is non-real and non-imaginary}\label{Sec Complex b}
This section is devoted to the proof of Theorem \ref{Theo Spectrum} \ref{b complex}. From Section \ref{Subsec Spectra Reduc},  we may assume that $\bb=e^{i\theta}$, with $\theta\in\left(0,\frac\pi2\right)$.
Thanks to the partial Fourier transform \eqref{Partial Fourier 2}, we get
\begin{align*}
	&\widehat{\mathscr{L}}_\bb = \mathcal{F}_{x_{2}\mapsto \xi_{2}} \mathscr{L}_{\bb} \mathcal{F}_{x_{2}\mapsto \xi_{2}}^{-1}= -\partial_{x}^2 + (\bb x-\xi_2)^2,\\
	&\textup{Dom}(\widehat{\mathscr{L}}_\bb)=\left\{ g\in L^2(\R^2) : (-\partial_{x}^2+(\bb x-\xi_2)^2)g\in L^2(\R^2)  \right\}\,.
\end{align*}

\subsection{Weyl sequence construction}
In this section, we show that the spectrum of $\widehat{\mathscr{L}}_\bb$ is the whole complex plane.  Let $\theta\in\left(0,\frac\pi2\right)$ and $\lambda\in\C$ be fixed.


Consider
\[t_{\theta}= \frac{1}{\cos(\theta)} \,, \]
choose $d \in (0,\tan(\theta))$ and take a function $\varphi_{\theta}\in C_{c}^{\infty}(\R)$ such that $0\leq \varphi_{\theta} \leq 1$ and 
\begin{equation}
	\varphi_{\theta}(t)=\left\{ \begin{aligned}
		&1\qquad \text{if }t \in \left[t_{\theta}-d,t_{\theta}+d\right]\eqqcolon J_{\theta}'\,,\\
		&0 \qquad \text{if }t\notin \left[t_{\theta}-2d,t_{\theta}+2d\right]\eqqcolon J_{\theta}\,.
	\end{aligned} \right.
\end{equation}
Then, we define the following family of functions:
\begin{equation}\label{Psi n}
	\Psi_{n}(x,\xi_{2}) = \mathds{1}_{[n-1,n+1]}(\xi_{2}) \varphi_{\theta}\left(\frac{x}{\xi_{2}}\right)\, u(x,\xi_{2})\,,
\end{equation}
where
\begin{equation}\label{func u}
	u(x,\xi_{2})= e^{-\frac{1}{4}Z^2(x,\xi_{2})}Z(x,\xi_{2})^{\frac{\lambda}{2\bb}-\frac{1}{2}}\,,\qquad \text{with } Z(x,\xi_{2})=\sqrt{2\bb} \left( x-\frac{\xi_{2}}{\bb}\right)\,.
\end{equation}
	For each $n\in \N$, observe that
\[ \operatorname{Supp}(\Psi_{n})\subset \left\{ (x,\xi_{2})\in \R^2: x \in [(t_{\theta}-2d)\xi_{2},(t_{\theta}+2d)\xi_{2}],\hspace{0.2 cm} \xi_{2} \in [n-1,n+1]\right\}\,,\]
which is a bounded set in $\R^2$, and on which 
$(x,\xi_2) \mapsto \varphi\left(\frac{x}{\xi_{2}}\right)u(x,\xi_{2})$ is smooth, thus $\Psi_{n}\in L^2(\R^2)$. 

In the rest of this section, we prove that $(\Psi_n)$ is a Weyl sequence for the operator $\widehat{\mathscr{L}}_\bb$ associated with $\lambda$. In other words, we are going to prove the validity of the limit~\eqref{eq.Weyl} below. 
To do so, we start by bounding $\|\Psi_n\|$ from below when $n$ is large.
\begin{lemma}\label{Lem Lower Bound}
We have, as $n\to+\infty$,
	\begin{equation*}
		\Vert \Psi_{n} \Vert^2 \gtrsim n^{\Re \left(\frac{\lambda}{\bb}\right)-1} e^{\frac{\sin^2\theta}{\cos\theta}(n-1)^2}\,.
	\end{equation*}

\end{lemma}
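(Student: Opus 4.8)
The plan is to estimate $\|\Psi_n\|^2$ directly from its definition~\eqref{Psi n} by computing the relevant integral in $(x,\xi_2)$. Since $\mathds{1}_{[n-1,n+1]}$ and $\varphi_\theta$ are nonnegative and $\varphi_\theta \equiv 1$ on $J_\theta'$, one has the lower bound
\begin{equation*}
  \|\Psi_n\|^2 \;\geq\; \int_{n-1}^{n+1} \int_{(t_\theta - d)\xi_2}^{(t_\theta + d)\xi_2} |u(x,\xi_2)|^2 \, \dd x \, \dd \xi_2 \,.
\end{equation*}
So everything reduces to understanding $|u(x,\xi_2)|^2$ on this region. I would first write $Z = \sqrt{2\bb}(x - \xi_2/\bb)$ and change variables; on the support, $x = t\,\xi_2$ with $t \in [t_\theta - d, t_\theta + d]$, so $x - \xi_2/\bb = \xi_2(t - \bb^{-1})$ and $Z = \sqrt{2\bb}\,\xi_2(t - \bb^{-1})$. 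The key point is that as $\xi_2 \to +\infty$ the modulus $|u|^2$ is dominated by the Gaussian factor $|e^{-\frac12 Z^2}| = e^{-\frac12 \Re Z^2}$, while the power factor $|Z^{\lambda/(2\bb) - 1/2}|^2$ contributes only polynomially in $\xi_2$ (via~\eqref{Power Modul}).

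Next I would compute the two exponents explicitly. For the Gaussian: $Z^2 = 2\bb\,\xi_2^2 (t - \bb^{-1})^2$, so $\Re Z^2 = 2\xi_2^2 \,\Re\!\big[\bb(t-\bb^{-1})^2\big]$. With $\bb = e^{i\theta}$ one expands $\bb(t - e^{-i\theta})^2 = e^{i\theta}t^2 - 2t + e^{-i\theta}$, whose real part is $t^2\cos\theta - 2t + \cos\theta = \cos\theta(t - t_\theta)^2 - t_\theta\cos\theta \cdot(\text{correction})$; more directly, $\Re Z^2 = 2\xi_2^2\big(t^2\cos\theta - 2t + \cos\theta\big)$, and completing the square in $t$ gives $\Re Z^2 = 2\xi_2^2 \cos\theta\,(t - t_\theta)^2 - 2\xi_2^2\,\frac{\sin^2\theta}{\cos\theta}$, using $t_\theta = 1/\cos\theta$. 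Hence $e^{-\frac12\Re Z^2} = e^{\xi_2^2 \sin^2\theta/\cos\theta}\, e^{-\xi_2^2 \cos\theta\,(t-t_\theta)^2}$. For the power factor, $|Z^{\lambda/(2\bb) - 1/2}|^2 = |Z|^{2\Re(\lambda/(2\bb)) - 1} \cdot e^{-2(\operatorname{Arg} Z)\Im(\lambda/(2\bb) - 1/2)}$ by~\eqref{Power Modul}; on the support $\operatorname{Arg} Z$ stays in a fixed bounded set (it converges to a fixed angle), so this factor is $\approx |Z|^{\Re(\lambda/\bb) - 1} \approx \xi_2^{\Re(\lambda/\bb) - 1}$ uniformly. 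For $\xi_2 \in [n-1, n+1]$ this is $\approx n^{\Re(\lambda/\bb) - 1}$, and since $\Re(\lambda/\bb)-1$ is a fixed real number this is within a constant of $n^{\Re(\lambda/\bb)-1}$ on the whole interval.

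Finally I would assemble the estimate: on $\xi_2 \in [n-1, n+1]$, $e^{\xi_2^2\sin^2\theta/\cos\theta} \geq e^{(n-1)^2 \sin^2\theta/\cos\theta}$, and the remaining integral
\begin{equation*}
  \int_{n-1}^{n+1}\int_{(t_\theta - d)\xi_2}^{(t_\theta+d)\xi_2} e^{-\xi_2^2\cos\theta\,(x/\xi_2 - t_\theta)^2}\,\dd x\,\dd\xi_2
\end{equation*}
should be bounded below by a positive power of $n$ (not a negative one): substituting $x = \xi_2 t$, the inner integral becomes $\xi_2 \int_{t_\theta-d}^{t_\theta+d} e^{-\xi_2^2\cos\theta\,(t-t_\theta)^2}\,\dd t$, and rescaling $s = \xi_2\sqrt{\cos\theta}\,(t - t_\theta)$ turns this into $\frac{\sqrt{\xi_2}}{\sqrt{\cos\theta}}\int_{-d\xi_2\sqrt{\cos\theta}}^{d\xi_2\sqrt{\cos\theta}} e^{-s^2}\,\dd s \gtrsim \sqrt{\xi_2} \gtrsim \sqrt n$ for $n$ large; this $\sqrt n$ can simply be absorbed (it only helps), so one safely obtains $\|\Psi_n\|^2 \gtrsim n^{\Re(\lambda/\bb) - 1}\, e^{(n-1)^2\sin^2\theta/\cos\theta}$ as claimed. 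The main technical care — and the only real obstacle — is verifying that all the "$\approx$" and "$\gtrsim$" steps (the power-factor bound and the $\operatorname{Arg} Z$ control) are genuinely \emph{uniform} in $(x,\xi_2)$ over the shrinking-slope but expanding region $\{x/\xi_2 \in J_\theta',\ \xi_2\in[n-1,n+1]\}$; this is where one must check that the angular variables do not drift and that the constants depend only on $\theta$, $d$ and $\lambda$, not on $n$.
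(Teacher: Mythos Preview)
Your proposal is correct and follows essentially the same route as the paper: restrict to the set where $\varphi_\theta\equiv 1$, pass to the variable $t=x/\xi_2$, identify the dominant exponential $e^{\xi_2^2\,p(t)}$ with $p(t)=-\cos\theta\,t^2+2t-\cos\theta$ peaking at $t_\theta$, and control the power factor via~\eqref{Power Modul}. The paper phrases the $t$-integral as an application of the Laplace method, whereas you complete the square and rescale explicitly; these are the same computation.

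One small slip: after the substitution $s=\xi_2\sqrt{\cos\theta}\,(t-t_\theta)$ the prefactor is $\dfrac{1}{\sqrt{\cos\theta}}$, not $\dfrac{\sqrt{\xi_2}}{\sqrt{\cos\theta}}$ (the Jacobian $dt = ds/(\xi_2\sqrt{\cos\theta})$ cancels the outer $\xi_2$ exactly). So the remaining Gaussian integral is $\approx$ a constant, not $\gtrsim\sqrt n$. This is harmless for your argument since you discard that factor anyway, but it is worth stating correctly. Your remark that the $\operatorname{Arg} Z$ control must be uniform is exactly right: since $t$ ranges over the \emph{fixed} compact interval $J_\theta'$ and $\Im(x-\xi_2/\bb)=\xi_2\sin\theta>0$, the argument stays in a fixed compact set independent of $n$, which gives the required uniformity.
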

\begin{proof}
Thanks to the expressions
	\begin{equation*}
		\Re Z^2(x,\xi_{2}) = 2\left[\cos(\theta) x^2 - 2x\xi_{2} + \cos(\theta) \xi_{2}^2\right],\qquad \vert Z(x,\xi_{2}) \vert^2 = 2\left[  x^2 - 2\cos(\theta)x\xi_{2} +  \xi_{2}^2\right]\,,
	\end{equation*}
	and \eqref{Power Modul}, we have
	\begin{align*}
		&\int_{\R^2} \vert \Psi_{n}(x,\xi_{2}) \vert^2 \, \dd x \dd \xi_{2}\\
		= &\int_{n-1}^{n+1}  \int_{(t_{\theta}-2d)\xi_{2}}^{(t_{\theta}+2d)\xi_{2}} \left\vert \varphi\left(\frac{x}{\xi_{2}}\right)\right\vert^2\left\vert u(x,\xi_{2}) \right\vert^2\, \dd x \dd \xi_{2}\\
		\gtrsim & \int_{n-1}^{n+1} \int_{\left(t_{\theta}-d\right)\xi_{2}}^{\left(t_{\theta}+d\right)\xi_{2}}e^{-(\cos(\theta) x^2 - 2x\xi_{2} + \cos(\theta) \xi_{2}^2)}\left( x^2 - 2\cos(\theta)x\xi_{2} +  \xi_{2}^2 \right)^{\Re \left(\frac{\lambda}{2\bb}\right)-\frac{1}{2}}\, \dd x \dd \xi_{2}\,.
	\end{align*}
 By the change of variable $x=t\xi_{2}$, it leads to
	\[\int_{\R^2} \vert \Psi_{n}(x,\xi_{2}) \vert^2 \, \dd x \dd \xi_{2} \gtrsim \int_{n-1}^{n+1} \xi_{2}^{\Re \left(\frac{\lambda}{\bb}\right)} \int_{t_{\theta}-d}^{t_{\theta}+d} e^{\xi_{2}^2\, p(t)} q(t) \, \dd t \dd \xi_{2}\,,\]
	where
		\begin{equation}\label{p}
		p(t)= - \cos(\theta) t^2+2t-\cos(\theta)\,,\quad 		q(t)= (t^2-2\cos(\theta)t+1)^{\Re \left(\frac{\lambda}{2\bb}\right)-\frac{1}{2}}\,.
	\end{equation} 
	Since $p$ attains its maximum at $t_{\theta}$ (with $p(t_\theta)=\frac{\sin^2(\theta)}{\cos(\theta)}>0$), the Laplace method yields, as $\xi_2\to+\infty$,
	\[ \int_{t_{\theta}-\frac{1}{2}}^{t_{\theta}+\frac{1}{2}} e^{\xi_{2}^2\, p(t)} q(t)\, \dd t \sim \sqrt{\frac{\pi}{\cos(\theta)}}\, q(t_\theta) \frac{e^{p(t_\theta)\xi_{2}^2}}{\xi_{2}}\,.\]
	Since $n-1<\xi_{2}<n+1$, we have
	\[\xi_{2}^{\Re \left(\frac{\lambda}{\bb}\right)-1} \gtrsim n^{\Re \left(\frac{\lambda}{\bb}\right)-1} \qquad \text{ and }\qquad p(t_\theta)\xi_{2}^2> p(t_\theta) (n-1)^2\,.\]
	The conclusion follows.
\end{proof}

\begin{proposition}\label{Theo Spectrum L b Hat}
We have
\begin{equation}\label{eq.Weyl}
\lim_{n\to+\infty}\frac{\left\Vert \left(\widehat{\mathscr{L}}_\bb-\lambda\right) \Psi_{n}\right\Vert^2 }{\left\Vert \Psi_{n}\right\Vert^2 }=0\,.
\end{equation}
\end{proposition}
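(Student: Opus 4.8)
The plan is to compute $(\widehat{\mathscr{L}}_\bb - \lambda)\Psi_n$ explicitly, exploit the fact that $u$ in \eqref{func u} was engineered to be an exact (distributional) solution of the ODE $(-\partial_x^2 + (\bb x - \xi_2)^2 - \lambda)u = 0$ in the $x$-variable, and show that the error comes entirely from the two cut-offs $\mathds{1}_{[n-1,n+1]}(\xi_2)$ and $\varphi_\theta(x/\xi_2)$. First I would verify the ODE claim: writing $Z = \sqrt{2\bb}(x - \xi_2/\bb)$, a direct computation (essentially the parabolic-cylinder / Hermite substitution behind \eqref{Davies}) shows $-\partial_x^2 u + (\bb x - \xi_2)^2 u = \lambda u$, so that in the region where both cut-offs equal $1$, $(\widehat{\mathscr{L}}_\bb - \lambda)\Psi_n = 0$. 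Hence $(\widehat{\mathscr{L}}_\bb - \lambda)\Psi_n$ is supported in the set where $\varphi_\theta(x/\xi_2) \ne 1$, i.e. $x/\xi_2 \in J_\theta \setminus J_\theta'$; the $\xi_2$-cut-off $\mathds{1}_{[n-1,n+1]}$ is harmless since $\widehat{\mathscr{L}}_\bb$ does not differentiate in $\xi_2$, so it just restricts the support without creating boundary terms.

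Next I would expand the commutator: with $m_n(x,\xi_2) = \varphi_\theta(x/\xi_2)$, we have $(\widehat{\mathscr{L}}_\bb - \lambda)\Psi_n = -\mathds{1}_{[n-1,n+1]}(\xi_2)\big[(\partial_x^2 m_n)\, u + 2(\partial_x m_n)\,\partial_x u\big]$. On the support of $\partial_x m_n$ the variable $t = x/\xi_2$ ranges over the compact annulus $J_\theta \setminus J_\theta'$, which is bounded \emph{away} from $t_\theta = 1/\cos\theta$; the derivatives $\partial_x m_n = \xi_2^{-1}\varphi_\theta'(x/\xi_2)$ and $\partial_x^2 m_n = \xi_2^{-2}\varphi_\theta''(x/\xi_2)$ contribute extra negative powers of $\xi_2 \approx n$, which are irrelevant at the level of exponential rates. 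Then I would estimate $\|(\widehat{\mathscr{L}}_\bb - \lambda)\Psi_n\|^2$ from above by the same Laplace-type integral as in Lemma~\ref{Lem Lower Bound}, but now with the $t$-integration restricted to $J_\theta\setminus J_\theta'$, i.e. to $|t - t_\theta| \ge d$. Since $p(t) = -\cos\theta\, t^2 + 2t - \cos\theta$ has its unique maximum at $t_\theta$ with $p(t_\theta) = \sin^2\theta/\cos\theta$, on this restricted region we have $p(t) \le p(t_\theta) - \delta$ for some $\delta = \delta(\theta,d) > 0$. Carrying out the trivial bound $\int_{n-1}^{n+1}\xi_2^{C}\int_{J_\theta\setminus J_\theta'} e^{\xi_2^2 p(t)}\,\dd t\,\dd\xi_2 \lesssim n^{C'} e^{(p(t_\theta)-\delta)(n+1)^2}$ (polynomial prefactors absorbing the $u$, $\partial_x u$ and cut-off-derivative powers) gives an upper bound on the numerator.

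Finally I would combine with the lower bound $\|\Psi_n\|^2 \gtrsim n^{\Re(\lambda/\bb)-1} e^{p(t_\theta)(n-1)^2}$ from Lemma~\ref{Lem Lower Bound}. The ratio is then $\lesssim n^{C''} \exp\big(p(t_\theta)[(n+1)^2 - (n-1)^2] - \delta (n+1)^2\big) = n^{C''}\exp\big(4 p(t_\theta)\, n - \delta(n+1)^2\big) \to 0$ as $n\to\infty$, since the quadratic term $-\delta(n+1)^2$ dominates both the linear term $4p(t_\theta)n$ and any fixed polynomial power $n^{C''}$. This proves \eqref{eq.Weyl}. The main obstacle is the bookkeeping in the second step: one must be careful that \emph{every} term produced by the two cut-offs and by differentiating $u$ (which involves $Z^{\lambda/2\bb - 1/2}$ and $Z^{\lambda/2\bb - 3/2}$, hence, on the support in question where $Z$ stays comparable to $n$ in modulus, only polynomially large factors) really does carry the Gaussian weight $e^{\xi_2^2 p(t)}$ with $t$ confined to the region $|t - t_\theta|\ge d$ where $p$ is strictly below its maximum — the quantitative gap $\delta>0$ is exactly what makes the quadratic exponential beat everything else, and it is essential that $d$ was chosen in $(0,\tan\theta)$ so that $J_\theta$ still avoids $t = 0$ and the substitution $x = t\xi_2$ is legitimate throughout.
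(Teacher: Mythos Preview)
Your overall strategy is the paper's, and the Laplace-method comparison between the commutator terms (living on $J_\theta\setminus J_\theta'$, where $p(t)\le p(t_\theta)-\delta$) and the lower bound from Lemma~\ref{Lem Lower Bound} is exactly right. There is, however, one genuine error at the very first step: the function $u$ in \eqref{func u} is \emph{not} an exact solution of $(-\partial_x^2+(\bb x-\xi_2)^2-\lambda)u=0$. If you carry out the ``direct computation'' you announce, with $\nu=\tfrac{\lambda}{2\bb}-\tfrac12$ and $\partial_x=\sqrt{2\bb}\,\partial_Z$, you find
\[
\bigl(-\partial_x^2+(\bb x-\xi_2)^2-\lambda\bigr)u
=\frac{2\bb\,\nu(1-\nu)}{Z^2}\,u
=\frac{C_{\lambda,\bb}}{Z^2}\,u\,,
\]
a nonzero residual (the profile $e^{-Z^2/4}Z^{\nu}$ is only the leading asymptotic of the parabolic-cylinder function $D_\nu(Z)$, not $D_\nu$ itself). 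Consequently $(\widehat{\mathscr{L}}_\bb-\lambda)\Psi_n$ is \emph{not} supported in the set $\{x/\xi_2\in J_\theta\setminus J_\theta'\}$: it has an additional term $\dfrac{C_{\lambda,\bb}}{Z^2}\Psi_n$ supported on the whole of $\operatorname{Supp}\Psi_n$, precisely where your commutator argument gives nothing.

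The fix is painless and is what the paper does: on $\operatorname{Supp}\Psi_n$ one has $|Z|\approx n$ (this is part of the ``bookkeeping'' you already flag), so $\bigl\|\tfrac{C_{\lambda,\bb}}{Z^2}\Psi_n\bigr\|^2\big/\|\Psi_n\|^2\lesssim n^{-4}\to 0$. Add this third contribution to your numerator estimate and the proof goes through; the exponential gain $e^{-\delta(n+1)^2}$ still handles the two commutator terms, and the $n^{-4}$ takes care of the residual. Everything else in your proposal matches the paper's argument.
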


\begin{proof}
By using the definition of the function $u$ in \eqref{func u}, 
a straightforward computation gives
	\[ \left(-\partial_{x}^2 + (\bb x-\xi_2)^2 - \lambda\right)u(x,\xi_{2}) =\frac{C_{\lambda,\bb}}{Z^2(x,\xi_{2})}u(x,\xi_{2})\,,\qquad C_{\lambda,\bb}= -\frac{\lambda^2}{2}+2\lambda-\frac{3\bb}{2}\,.\]
	Then, it yields that
	\begin{align*}
		& \left(\widehat{\mathscr{L}}_\bb-\lambda\right) \Psi_{n}(x,\xi_{2})\\
		=& \mathds{1}_{[n-1,n+1]}(\xi_{2})\left[- \frac{1}{\xi_{2}^2} \varphi''\left(\frac{x}{\xi_{2}}\right)\, u(x,\xi_{2})- 2\frac{1}{\xi_{2}} \varphi'\left(\frac{x}{\xi_{2}}\right) \,\partial_{x}u(x,\xi_{2})\right]+\frac{C_{\lambda,\bb}}{Z^2(x,\xi_{2})} \Psi_{n}(x,\xi_{2})\,.
	\end{align*}
	From the inequalities 
	\[ 2 \sin^2(\theta) \xi_{2}^2 \leq |Z(x,\xi_{2})|^2\leq 2(|x|+|\xi_{2}|)^2,\]
	we deduce that, for all $ n\gtrsim 1$, all $\xi_{2} \in [n-1,n+1]$, and all $x\in \xi_{2} J_{\theta}$,  
	\begin{equation}\label{Estimate Z}
		|Z(x,\xi_{2})| \approx |\xi_{2}| \approx n\,.
	\end{equation}
Hence, it leads to
	\begin{equation*}
		\frac{\left\Vert \left(\widehat{\mathscr{L}}_\bb-\lambda\right) \Psi_{n}\right\Vert^2 }{\left\Vert \Psi_{n}\right\Vert^2 }\lesssim \left[ \frac{\Vert T_{1,n} \Vert^2 + \Vert T_{2,n} \Vert^2}{\Vert \Psi_{n} \Vert^2} + \frac{1}{n^4}\right]\,,
	\end{equation*}
	where
	\begin{align*}
	T_{1,n}(x,\xi_{2}) &=  \mathds{1}_{[n-1,n+1]}(\xi_{2})\, \frac{1}{\xi_{2}^2} \varphi''\left(\frac{x}{\xi_{2}}\right) u(x,\xi_{2})\,,\\
	T_{2,n}(x,\xi_{2}) &=  \mathds{1}_{[n-1,n+1]}(\xi_{2})\, \frac{1}{\xi_{2}} \varphi'\left(\frac{x}{\xi_{2}}\right) \partial_{x}u(x,\xi_{2})\,.
	\end{align*}
	Notice that, for $k\in \{1,2\}$,
	\[ \operatorname{Supp} T_{k,n} \subset \left\{ (x,\xi_{2})\in \R^2: x \in \xi_{2}\left(J_{\theta}\setminus J_{\theta}'\right),\, \xi_{2} \in [n-1,n+1]\right\}\,.\]
	Let us start with $T_{1,n}$. By using the change of variable $x=\xi_{2} t$, we get
	\begin{align*}
		&\int_{\R^2} \vert T_{1,n} \vert^2 \, \dd x \dd \xi_{2}\\
		= & \int_{n-1}^{n+1} \frac{1}{\xi_{2}^4} \int_{\xi_{2}\left(J_{\theta}\setminus J_{\theta}'\right)} \left\vert \varphi''\left(\frac{x}{\xi_{2}}\right)\right\vert^2 \left\vert u(x,\xi_{2}) \right\vert^2\, \dd x\dd \xi_{2}\\
		\approx & \int_{n-1}^{n+1} \frac{1}{\xi_{2}^4}  \int_{\xi_{2}\left(J_{\theta}\setminus J_{\theta}'\right)}\left\vert \varphi''\left(\frac{x}{\xi_{2}}\right)\right\vert^2 e^{-\cos(\theta) x^2 +2x\xi_{2} - \cos(\theta) \xi_{2}^2}\\
		& \hspace{3 cm}\times\left( x^2 - 2\cos(\theta)x\xi_{2} +  \xi_{2}^2 \right)^{\Re \left(\frac{\lambda}{2\bb}\right)-\frac{1}{2}}\, \dd x \dd \xi_{2}\\
		\approx &\int_{n-1}^{n+1} \xi_{2}^{\Re \left(\frac{\lambda}{\bb}\right)-4 } \int_{J_{\theta}\setminus J'_{\theta}} \vert\varphi''(t)\vert^2 e^{\xi_{2}^2\, p(t)} q(t) \, \dd t \dd \xi_{2}\,,
	\end{align*}
	where $p(t)$ and $q(t)$ are given in \eqref{p}.	Since $p$ is a concave polynomial of degree two and attains its maximum at $t_{\theta} \in (t_{\theta}-d,t_{\theta}+d)$, we have
	\[p(t)\leq \kappa_{\theta}=   p\left(t_{\theta}+d\right)<p(t_\theta)\,, \qquad \forall t\in J_{\theta}\setminus J'_{\theta}\,.\]
	Note that $\kappa_{\theta}=\cos\theta((\tan^2\theta-d^2)$.
	Thus, we get
	\begin{align*}
		\int_{\R^2} \vert T_{1,n} \vert^2 \, \dd x \dd \xi_{2}\lesssim &\int_{n-1}^{n+1} \xi_{2}^{\Re \left(\frac{\lambda}{\bb}\right)-4 }  e^{\kappa_{\theta}\xi_{2}^2} \, \dd \xi_{2}	\lesssim n^{\Re \left(\frac{\lambda}{\bb}\right)-4} e^{\kappa_{\theta}(n+1)^2}\,, \qquad \text{as }		n\to+\infty\,.
	\end{align*}
	For the term $T_{2,n}$, we observe that
	\[ \partial_{x}u(x,\xi_{2}) =\sqrt{2\bb}\left[ -\frac{Z(x,\xi_{2})}{2}+\left(\frac{\lambda}{2\bb}-\frac{1}{2}\right)\frac{1}{Z(x,\xi_{2})}\right]u(x,\xi_{2})\,.\]
	Thanks to \eqref{Estimate Z} and arguing as above, we get	
	\[	\int_{\R^2} \vert T_{2,n} \vert^2 \, \dd x \dd \xi_{2}\lesssim 
	n^{\Re \left(\frac{\lambda}{\bb}\right)} e^{\kappa_{\theta}(n+1)^2},\qquad \text{as }n\to+\infty\,. \]
	From the estimates of $T_{1,n}$ and $T_{2,n}$ above and Lemma \ref{Lem Lower Bound}, we deduce that
	\begin{align*}
		\frac{\left\Vert \left(\widehat{\mathscr{L}}_\bb-\lambda\right) \Psi_{n}\right\Vert^2 }{\left\Vert \Psi_{n}\right\Vert^2 }\lesssim \left[n e^{\kappa_{\theta}(n+1)^2-p(t_{\theta})(n-1)^2}+\frac{1}{n^4}\right], \qquad \text{as }n\to +\infty\,.
	\end{align*}
	Since $p(t_{\theta})-\kappa_{\theta}>0$, the right-hand side goes to zero as $n\to+\infty$. 
\end{proof}
Proposition \ref{Theo Spectrum L b Hat} establishes that $\lambda\in\mathrm{Spec}(\widehat{\mathscr{L}}_\bb)$. Since the support of $\Psi_{n}$ escapes to infinity in the $\xi_{2}$ direction, it yields that $f_{n}= \frac{\Psi_{n}}{\Vert \Psi_{n} \Vert}$ weakly converges to zero. Applying \cite[Thm.~9.1.3(i)]{Edmunds-Evans18}, we obtain the following result:
\begin{corollary}\label{Cor Weakly Conv}
	We have $\operatorname{Spec}_{\textup{ess,2}}(\widehat{\mathscr{L}}_\bb) = \C$.
	\end{corollary}

\subsection{Complex Landau levels}
Let us show that the elements of  
$\Lambda_\bb = \{ (2k+1)\bb: k\in \N_{0}\}$ 
are the eigenvalues of $\widehat{\mathscr{L}}_\bb$
(recall that $\Re\bb > 0$ in this section). 
For that purpose, we recall that the Hermite functions $\psi_{k}\in L^2(\R)$ are defined by
\[ \psi_{k}(x)= c_{k} e^{-\frac{x^2}{2}} H_{k}(x)\,,\]
where $H_{k}$ is the $k$-th Hermite polynomial and $c_k>0$ is a normalizing constant (see  \cite[Thm.~6.2]{Zworski12}). They satisfy
\begin{equation}\label{Harmonic Eq}
	\left(-\frac{\dd^2}{\dd x^2} +x^2  \right) \psi_{k} =(2k+1) \psi_{k}\,.
\end{equation}
This suggests to consider the family
	\[h_{k\ell}(x,\xi_2)= \psi_{k}\left(\sqrt{\bb}\left(x-\frac{\xi_2}{\bb}\right)\right)\psi_\ell\left(\frac{\xi_{2}}{\sqrt{\cos\theta}}\right)\,,\]
which satisfies
	\begin{equation}\label{Eigenvalue}
	\widehat{\mathscr{L}}_\bb h_{k\ell} = \bb \,(2k+1)  h_{k\ell}\,,\quad 	\widehat{\mathscr{L}}_\bb^* \overline{h}_{k\ell} = \overline{\bb} (2k+1)  \overline{h}_{k\ell}\,,
\end{equation}
where we used Proposition \ref{Prop Adjoint} for the second equality. The presence of $\sqrt{\cos\theta}$ is here to ensure that $h_{k\ell}\in L^2(\R^2)$. Indeed, we have
\begin{equation*}
	\left\vert h_{k\ell}(x,\xi_{2}) \right\vert^2 = |c_{k}c_\ell|^2  \left\vert H_{k}\left(\sqrt{\bb}\left(x-\frac{\xi_2}{\bb}\right)\right)H_{\ell}\left(\frac{\xi_2}{\sqrt{\cos\theta}}\right)\right\vert^2 e^{- \left(\cos(\theta) x^2 -2 x\xi_{2}+\cos(\theta)\xi_{2}^2\right) } e^{-\frac{\xi_2^2}{\cos\theta}}\,,
\end{equation*}
which is integrable since
\[\cos(\theta) x^2 -2 x\xi_{2}+\cos(\theta)\xi_{2}^2+\frac{\xi^2_2}{\cos\theta}=\cos\theta\left[\left(x-\frac{\xi_2}{\cos\theta}\right)^2+\xi_2^2\right]\,.\]
This shows that
\[
\Lambda_\bb
\subset \operatorname{Spec}_{\textup{p}}(\widehat{\mathscr{L}}_\bb)\,.
\]

\begin{lemma}\label{lem.total}
	The family $(h_{k\ell})_{(k,\ell)\in\N_0^2}$ is a total family in $L^2(\R^2)$.	
\end{lemma}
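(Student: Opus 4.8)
The strategy is to show that any $g \in L^2(\R^2)$ orthogonal to every $h_{k\ell}$ must vanish. Fix such a $g$. The definition $h_{k\ell}(x,\xi_2) = \psi_k\!\left(\sqrt{\bb}\,(x-\xi_2/\bb)\right)\psi_\ell\!\left(\xi_2/\sqrt{\cos\theta}\right)$ separates the two variables, so the plan is first to exploit orthogonality to the functions $\psi_\ell(\cdot/\sqrt{\cos\theta})$ in the $\xi_2$-slot — but one has to be careful, since the $x$-factor is \emph{not} a fixed function of $x$; it depends on $\xi_2$ through the shift $x - \xi_2/\bb$ and is moreover a function of the complex variable $\sqrt\bb(x-\xi_2/\bb)$. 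So the cleaner route is to undo the shift. For fixed $\xi_2$, perform in the $x$-variable the (real) translation $x \mapsto x + \xi_2/\bb_{\mathrm{re}}$? No — because $\bb$ is complex the natural shift $\xi_2/\bb$ is complex. Instead I would write $\psi_k$ composed with the entire function $x \mapsto \sqrt\bb\,x$, use that the $\psi_k$ are (up to the Gaussian) polynomials, and reduce the orthogonality relations $\langle g, h_{k\ell}\rangle = 0$ to a statement about a Gaussian-weighted transform of $g$.

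Concretely, I would proceed as follows. First, since $\{\psi_\ell(\cdot/\sqrt{\cos\theta})\}_{\ell\in\N_0}$ is (a rescaling of) an orthogonal basis of $L^2(\R_{\xi_2})$ — equivalently the polynomials are dense in the Gaussian-weighted space — the relations $\langle g, h_{k\ell}\rangle = 0$ for all $\ell$ force, for each fixed $k$, that the function
\[
\xi_2 \longmapsto \int_{\R} g(x,\xi_2)\,\overline{\psi_k\!\left(\sqrt{\bb}\left(x-\tfrac{\xi_2}{\bb}\right)\right)}\,\dd x
\]
vanishes for a.e.\ $\xi_2$ (here one needs a Fubini/integrability check: the Gaussian decay in the explicit formula for $|h_{k\ell}|^2$ computed just above, namely $\exp(-\cos\theta[(x-\xi_2/\cos\theta)^2+\xi_2^2])$, guarantees $g\,\overline{h_{k\ell}}\in L^1(\R^2)$ by Cauchy–Schwarz). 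Next, fix $\xi_2$ in the full-measure set where \emph{all} these vanish simultaneously (countable intersection). Inside the inner integral, expand $\psi_k$ in terms of monomials times the Gaussian $\exp(-\tfrac12 \bb(x-\xi_2/\bb)^2)$; taking $\C$-linear combinations over $k$, one concludes that $\int_\R g(x,\xi_2)\,x^m\,e^{-\frac{\bb}{2}(x-\xi_2/\bb)^2}\,\dd x = 0$ for every $m\in\N_0$. Absorbing the Gaussian into $g$, set $G_{\xi_2}(x) := g(x,\xi_2)\,e^{-\frac{\bb}{2}(x-\xi_2/\bb)^2}$; then $G_{\xi_2}\perp x^m$ for all $m$. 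Because $e^{-\frac{\bb}{2}(x-\xi_2/\bb)^2}$ has $\Re$ of its exponent equal to $-\tfrac{\cos\theta}{2}(x-\xi_2/\cos\theta)^2$ (again from the identity already displayed), $G_{\xi_2}$ decays like a genuine Gaussian, so $G_{\xi_2}$ and all the functions $x^m G_{\xi_2}$ lie in $L^2$, and moreover $G_{\xi_2}$ lies in a weighted $L^2$ in which polynomials are dense (this is the classical completeness of Hermite functions / the determinacy of the Gaussian moment problem). Hence $G_{\xi_2} = 0$, so $g(\cdot,\xi_2)=0$, for a.e.\ $\xi_2$, i.e.\ $g=0$.

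The main obstacle is the step where the shift $x - \xi_2/\bb$ by a \emph{complex} amount interacts with the density argument: one cannot simply translate in $L^2(\R)$ by an imaginary vector. The way around it is to never actually translate, but to work with the explicit Gaussian weight $e^{-\frac{\bb}{2}(x-\xi_2/\bb)^2}$ whose modulus, by the completed-square identity $\cos\theta\, x^2 - 2x\xi_2 + \cos\theta\,\xi_2^2 + \xi_2^2/\cos\theta = \cos\theta[(x-\xi_2/\cos\theta)^2 + \xi_2^2]$ already used above to show $h_{k\ell}\in L^2$, is a bona fide real Gaussian in $x$ centered at the real point $\xi_2/\cos\theta$; the polynomial-density / moment-problem argument then runs in the (real) weighted space $L^2(\R, e^{-\cos\theta\,(x-\xi_2/\cos\theta)^2}\dd x)$, where it is standard. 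A secondary, purely bookkeeping point is justifying all the interchanges of sum and integral, which is handled uniformly by the Gaussian bounds. Once $g=0$ is established, totality of $(h_{k\ell})$ follows by definition.
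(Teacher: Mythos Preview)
Your two–step reduction (first eliminate $\ell$, then eliminate $k$) is a different route from the paper's, and Step~2 is fine: for fixed $\xi_2$ the weight $|e^{-\frac{\overline\bb}{2}(x-\xi_2/\overline\bb)^2}|=e^{-\frac{\cos\theta}{2}(x-\xi_2/\cos\theta)^2}e^{\frac{\sin^2\theta}{2\cos\theta}\xi_2^2}$ is a genuine real Gaussian in $x$, and since $g(\cdot,\xi_2)\in L^2(\R)$ a.e.\ the weighted moment argument you describe goes through.

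The gap is in Step~1. You invoke that $\{\psi_\ell(\cdot/\sqrt{\cos\theta})\}_\ell$ is an orthonormal basis of $L^2(\R_{\xi_2})$ to pass from $\langle g,h_{k\ell}\rangle=0$ for all $\ell$ to $\Phi_k(\xi_2):=\int_\R g(x,\xi_2)\,\overline{\psi_k(\sqrt\bb(x-\xi_2/\bb))}\,\dd x=0$ a.e. That inference needs $\Phi_k\in L^2(\R_{\xi_2})$, which you do not check and which can actually fail: by the very identity you quote,
\[
\int_\R\big|\psi_k(\sqrt\bb(x-\xi_2/\bb))\big|^2\,\dd x \ \approx\ C_k\,e^{\frac{\sin^2\theta}{\cos\theta}\xi_2^2},
\]
so Cauchy--Schwarz only gives $|\Phi_k(\xi_2)|\lesssim e^{\frac{\sin^2\theta}{2\cos\theta}\xi_2^2}\|g(\cdot,\xi_2)\|_{L^2_x}$, and concrete choices of $g$ (e.g.\ $g(x,\xi_2)=e^{-x^2}h(\xi_2)$ with $h\in L^2$ and $\theta$ close to $\tfrac\pi2$) produce a $\Phi_k$ that genuinely grows like $e^{c\xi_2^2}$ and is not in $L^2$. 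The ``polynomials are dense in the Gaussian-weighted space'' formulation does not help: it is equivalent to the same membership $\Phi_k\in L^2(\R)$.

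The paper circumvents this by not splitting the variables. It observes that the $\C$-span of $\{h_{k\ell}\}$ coincides with that of $\{x_1^kx_2^\ell e^{-\frac12 Q(x)}\}$, where $Q(x_1,x_2)=\bb(x_1-x_2/\bb)^2+x_2^2/\cos\theta$ has positive-definite real part, and then shows that $F(\xi)=\int g(x)e^{-ix\cdot\xi}e^{-\frac12 Q(x)}\,\dd x$ is holomorphic in a strip with all derivatives vanishing at $0$, hence $F\equiv0$ and $g=0$. Your approach can be repaired by the same mechanism in one dimension: absorb the factor $e^{-\xi_2^2/(2\cos\theta)}$ coming from $\psi_\ell$ into $\Phi_k$ before testing against monomials; the resulting function $\tilde\Phi_k(\xi_2)=\Phi_k(\xi_2)e^{-\xi_2^2/(2\cos\theta)}$ then satisfies $|\tilde\Phi_k(\xi_2)|\lesssim e^{-\frac{\cos\theta}{2}\xi_2^2}\|g(\cdot,\xi_2)\|_{L^2_x}$, has all moments zero, and the Fourier-analyticity argument applies. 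But this is essentially the paper's trick applied one variable at a time, not a consequence of Hermite $L^2$-completeness as you wrote.
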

\begin{proof}
The proof is standard. We recall the main steps for completeness. Take $\psi\in L^2(\R^2)$ such that $\psi$ is orthogonal to all the $h_{k\ell}$. Then, $\psi=\psi(x_1,x_2)$ is also orthogonal to all the family 
\[\left(x_1^k x_2^\ell e^{-\frac12Q(x_1,x_2)}\right)_{(k,\ell)\in\N_0^2}\,,\quad\mbox{ where }\quad Q(x_1,x_2)=\bb\left(x_1-\frac{x_2}{\bb}\right)^2+\frac{x_2^2}{\cos\theta}\,.\] 
We let, for all $\xi\in\R^2$,
\[F(\xi)=\int_{\R^2}\psi(x) e^{-ix\cdot\xi} e^{-\frac12Q(x)}\mathrm{d} x\,.\]
By using that $\Re Q$ is a positive definite quadratic form, the function $F$ extends to a holomorphic function on a strip about $\R^2$. By using the dominate convergence theorem and the aforementioned orthogonality, we get $\partial^\alpha_{\xi}F(0)=0$, for all $\alpha\in\N_0^2$. This shows that $F$ is zero and so is $\psi$ thanks to the inverse Fourier transform.
	\end{proof}

In fact, the Landau levels are the only elements of the point spectrum.
\begin{proposition}\label{Theo C Spectrum L b Hat}
	Let $\bb=e^{i\theta}$ with $\theta\in\left(0,\frac\pi2\right)$, then
	\[ \operatorname{Spec}_{\textup{p}}(\widehat{\mathscr{L}}_\bb)\subset 
	\Lambda_\bb
	\,.
	\]
\end{proposition}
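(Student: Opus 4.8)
The plan is to show that if $\lambda \in \operatorname{Spec}_{\textup{p}}(\widehat{\mathscr{L}}_\bb)$, then $\lambda \in \Lambda_\bb$. So suppose $\widehat{\mathscr{L}}_\bb \psi = \lambda \psi$ for some nonzero $\psi \in \operatorname{Dom}(\widehat{\mathscr{L}}_\bb)$. Since $\widehat{\mathscr{L}}_\bb$ acts only in the $x$-variable with $\xi_2$ as a parameter, the first step is to fix $\xi_2$ and regard $x \mapsto \psi(x,\xi_2)$ as a solution of the one-dimensional ODE $\left(-\partial_x^2 + (\bb x - \xi_2)^2 - \lambda\right) g = 0$, i.e.\ a shifted complex-rotated harmonic oscillator eigenvalue equation. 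After the affine (complex) substitution $Z = \sqrt{2\bb}(x - \xi_2/\bb)$ as in \eqref{func u}, this becomes the standard parabolic-cylinder/Weber equation, whose solution space is two-dimensional and spanned by explicit special functions. The key analytic input is that, for generic $\lambda$, \emph{no} nonzero solution of this ODE lies in $L^2(\R)$ in the $x$-variable: along the ray where $\Re Z^2 \to -\infty$ the "decaying" Weber solution actually grows, and the only way to get $L^2$-decay in both directions is the eigenvalue resonance condition $\lambda/(2\bb) - 1/2 \in -\N_0$, equivalently $\lambda \in \Lambda_\bb$ (this is precisely the Davies/Arnal--Siegl spectral picture for \eqref{Davies}, cf.\ \cite{Davies_1999a,Arnal-Siegl_2023}).

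The second step is to promote this pointwise-in-$\xi_2$ statement to a statement about $\psi \in L^2(\R^2)$. By Fubini, for a.e.\ $\xi_2 \in \R$ the slice $\psi(\cdot,\xi_2)$ belongs to $L^2(\R_x)$ and (after checking that $\widehat{\mathscr{L}}_\bb\psi = \lambda\psi$ in $L^2(\R^2)$ forces the slice to satisfy the ODE distributionally, hence classically by elliptic regularity in one variable) solves the Weber equation. If $\lambda \notin \Lambda_\bb$, the first step forces $\psi(\cdot,\xi_2) = 0$ for a.e.\ $\xi_2$, hence $\psi = 0$, contradicting $\lambda$ being an eigenvalue; therefore $\operatorname{Spec}_{\textup{p}}(\widehat{\mathscr{L}}_\bb) \subset \Lambda_\bb$. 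One must be slightly careful that the null set of bad $\xi_2$ can in principle depend on $\lambda$, but since $\lambda$ is fixed this is harmless. An alternative, perhaps cleaner, route avoiding the ODE asymptotics: combine Lemma \ref{lem.total} with the biorthogonality in \eqref{Eigenvalue}. If $\lambda \notin \Lambda_\bb$ and $\psi$ is a $\lambda$-eigenfunction, then for every $(k,\ell)$ one has $(\lambda - \bar\bb(2k+1))\langle \psi, \overline{h}_{k\ell}\rangle = \langle \widehat{\mathscr{L}}_\bb \psi, \overline{h}_{k\ell}\rangle - \langle \psi, \widehat{\mathscr{L}}_\bb^*\overline{h}_{k\ell}\rangle = 0$; wait—this needs $\overline{h}_{k\ell} \in \operatorname{Dom}(\widehat{\mathscr{L}}_\bb^*)$ and $\psi \in \operatorname{Dom}(\widehat{\mathscr{L}}_\bb)$ to move the operator across, which holds by \eqref{Eigenvalue}. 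Since $\lambda \ne \bar\bb(2k+1)$ would need checking against $\Lambda_\bb = \{(2k+1)\bb\}$ rather than its conjugate, one instead pairs $\psi$ against the eigenfunctions of $\widehat{\mathscr{L}}_\bb^*$ for its eigenvalue $\overline{(2k+1)\bb}$, concluding $\langle \psi, \overline{h}_{k\ell}\rangle = 0$ for all $(k,\ell)$, whence $\psi = 0$ by Lemma \ref{lem.total}.

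I expect the main obstacle to be making the first step rigorous: one must rule out \emph{every} nonzero $L^2(\R_x)$ solution of the Weber equation when $\lambda \notin \Lambda_\bb$, which requires the precise connection formulae / asymptotics of parabolic cylinder functions $D_\nu$ along complex rays (the exponential $e^{-Z^2/4}$ in \eqref{func u} decays only in a sector of the $Z$-plane, and one must verify that the image of $\R_x$ under $x \mapsto Z(x,\xi_2)$ crosses into the growth sector unless the polynomial prefactor truncates). If instead one uses the biorthogonality argument, the main obstacle shifts to the domain bookkeeping—verifying that the Hermite-type functions $\overline{h}_{k\ell}$ indeed lie in $\operatorname{Dom}(\widehat{\mathscr{L}}_\bb^*) = \operatorname{Dom}(\widehat{\mathscr{L}}_{\overline\bb})$ (which is immediate from \eqref{Eigenvalue}) and that the pairing manipulation is legitimate, i.e.\ that for $\psi \in \operatorname{Dom}(\widehat{\mathscr{L}}_\bb)$ and $g \in \operatorname{Dom}(\widehat{\mathscr{L}}_\bb^*)$ we have $\langle \widehat{\mathscr{L}}_\bb\psi, g\rangle = \langle \psi, \widehat{\mathscr{L}}_\bb^* g\rangle$ by the very definition of the adjoint. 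This second route is shorter and I would pursue it, using the ODE analysis only as a sanity check.
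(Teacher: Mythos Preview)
Your second route---pairing an eigenfunction $\psi$ against the $\overline{h}_{k\ell}$, using \eqref{Eigenvalue} and the definition of the adjoint to obtain $((2k+1)\bb-\lambda)\langle\psi,\overline{h}_{k\ell}\rangle=0$, then invoking Lemma~\ref{lem.total} to force $\psi=0$ when $\lambda\notin\Lambda_\bb$---is exactly the paper's proof. The ODE/Weber-function approach you sketch first would also work but requires the parabolic-cylinder asymptotics you flag as the main obstacle; the paper bypasses this entirely via the shorter duality argument you correctly single out and choose to pursue.
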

\begin{proof}
	Consider $\lambda$ in the point spectrum and $\psi$ a corresponding normalized eigenfunction. We have $(\widehat{\mathscr{L}}_\bb-\lambda)\psi=0$ so that, for all $(k,\ell)\in\N^2_0$, $\langle(\widehat{\mathscr{L}}_\bb-\lambda)\psi,\overline{h}_{k\ell}\rangle=0$.
	By using the adjoint and \eqref{Eigenvalue}, we get $\langle\psi,((2k+1)\overline{\bb}-\overline{\lambda})\overline{h}_{k\ell}\rangle=0$. Thus, for all $(k,\ell)\in\N^2_0$, $((2k+1)\bb-\lambda)\langle\psi,\overline{h}_{k\ell}\rangle=0$. If for all $k\in\N_0$, we have $(2k+1)\bb-\lambda\neq 0$, then Lemma \ref{lem.total} shows that $\psi=0$ which is a contradiction.
\end{proof}

			\section{When the magnetic field is purely imaginary}\label{Sec Imag b}
			In this section, we prove 
			Theorem \ref{Theo Spectrum}~\ref{b imaginary}. Thanks to Proposition \ref{Prop Spectra}, it is enough to study the operator when $\bb=i$. 
			That is why we consider~$\mathscr{L}_{i}$ only.

It will be convenient to use a change of gauge to cancel the term $x_1^2$:
\[\widetilde{\mathscr{L}}_i=e^{-ix_1^2/2}\mathscr{L}_i e^{ix_1^2/2}= (-i\partial_{x_{1}}+x_1)^2 + (-i\partial_{x_{2}}- ix_{1})^2=-\partial_{x_1}^2-\partial_{x_2}^2+(x_1 D_{x_1}+D_{x_1}x_1)-2ix_1 D_{x_2}\,.\]
By using the Fourier transform $\mathcal{F}_{x\mapsto \xi}$ given in \eqref{Fourier}, we get the \emph{first order} differential operator
	\begin{equation}\label{L i Hat}
	\begin{aligned}
		&\widehat{\mathscr{L}}_{i}= \mathcal{F}_{x\mapsto \xi}\,\widetilde{\mathscr{L}}_{i}\,\mathcal{F}_{x\mapsto \xi}^{-1}=2(i\xi_{1}+\xi_{2})\partial_{\xi_{1}}+ \xi_{1}^2+\xi_{2}^2+i\,,\\
		&\textup{Dom}(\widehat{\mathscr{L}}_{i})=\left\{ g\in L^2(\R^2) : \left[ 2(i\xi_{1}+\xi_{2})\partial_{\xi_{1}}+ \xi_{1}^2+\xi_{2}^2\right]g\in L^2(\R^2)  \right\}\,.
	\end{aligned}
	        \end{equation}
The purely imaginary case is special since there is no point spectrum any more. 
Indeed, take $\lambda\in \C$, suppose that $\psi$ is an eigenfunction corresponding to $\lambda$ of $\widehat{\mathscr{L}}_{i}$, \emph{i.e.}, $\psi\in \operatorname{Dom}(\widehat{\mathscr{L}}_{i})$ and
\[ 2(i\xi_{1}+\xi_{2})\partial_{\xi_{1}}\psi(\xi_{1},\xi_{2})+ \left(\xi_{1}^2+\xi_{2}^2+i-\lambda\right)\psi(\xi_{1},\xi_{2})=0\,.\]
By fixing $\xi_{2}\neq 0$ and solving the ordinary differential equation with respect to $\xi_{1}$, we obtain a solution in the form
\begin{equation}\label{function u}
	u(\xi_{1},\xi_{2})=e^{-\frac{1}{2}\xi_{1}\xi_{2}+i\frac{\xi_{1}^2}{4}}(\xi_{1}-i\xi_{2})^{-\frac{1+i\lambda}{2}}\,,
\end{equation}
and thus, for some constant $C(\xi_2)$, we have, for all $\xi_1\in\R$, $\psi(\xi)=C(\xi_2)u(\xi)$. For all $\xi_2\neq 0$, $u(\cdot, \xi_2)$ does not belong to $L^2(\R)$. Thus, $\psi=0$.

However, the spectrum is still the whole complex plane. To see this, it is sufficient to consider $\lambda$ with $\Im\lambda\geq 0$ and to construct an appropriate Weyl sequence. 
Indeed, in the purely imaginary case, one has the extra symmetry
$T\mathscr{L}_{i}=  \mathscr{L}_{i} T$, where $T\psi = \overline{\psi}$. 
Consequently,
$\lambda \in \operatorname{Spec}_{\textup{ess,2}}(\mathscr{L}_{i}) \Longleftrightarrow  \overline{\lambda} \in \operatorname{Spec}_{\textup{ess,2}}(\mathscr{L}_{i})$.

We fix $\lambda\in\C$ with $\Im\lambda\geq 0$.	Choosing $\alpha\in (1,2)$ and we set
\[ \Psi_{n}(\xi_{1},\xi_{2}) = \mathds{1}_{[1,2]}(n\xi_{2}) \varphi\left(\frac{\xi_{1}}{n^{\alpha}\xi_{2}}\right)u(\xi_{1},\xi_{2})\,,\]
where $\varphi \in  C^{\infty}(\R)$ is such that 
\begin{align*}
	\varphi(s)=\left\{ \begin{aligned}
		& 1  \qquad \text{if }s\geq 2\,,\\
		& 0  \qquad \text{if }s\leq 1\,.\\
	\end{aligned}\right.
\end{align*}

\begin{lemma}
For all $n\in \N$, we have $\Psi_n\in L^2(\R^2)$.
\end{lemma}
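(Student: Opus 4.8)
The plan is to show that, for each fixed $n$, the two cut-off factors in $\Psi_n$ confine its support to a region which is bounded in $\xi_2$ and on which $u$ is a smooth function with Gaussian decay in $\xi_1$. First I would identify this support: the factor $\mathds{1}_{[1,2]}(n\xi_2)$ forces $\xi_2\in[\tfrac1n,\tfrac2n]$, and since $\varphi$ vanishes on $(-\infty,1]$, the factor $\varphi(\xi_1/(n^\alpha\xi_2))$ vanishes unless $\xi_1\ge n^\alpha\xi_2$; combined with $\xi_2\ge\tfrac1n$ this gives $\xi_1\ge n^{\alpha-1}\ge 1$. Hence
\[
\operatorname{Supp}\Psi_n\subset\Big\{(\xi_1,\xi_2):\ \tfrac1n\le\xi_2\le\tfrac2n,\ \xi_1\ge n^{\alpha-1}\Big\}\eqqcolon\Omega_n\,,
\]
and on $\Omega_n$ the number $\xi_1-i\xi_2$ lies in the open fourth quadrant, in particular away from the origin and from the branch cut of the principal power, so $u$ is smooth there.

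Next I would estimate $|u|$ pointwise on $\Omega_n$. Writing $\lambda=\lambda_1+i\lambda_2$ with $\lambda_2=\Im\lambda\ge 0$ and applying \eqref{Power Modul} to $z=\xi_1-i\xi_2$, $c=-\tfrac{1+i\lambda}{2}$ (so $\Re c=\tfrac{\lambda_2-1}{2}$ and $\Im c=-\tfrac{\lambda_1}{2}$), one gets
\[
|u(\xi_1,\xi_2)|=e^{-\frac12\xi_1\xi_2}\,|\xi_1-i\xi_2|^{\frac{\lambda_2-1}{2}}\,e^{\frac{\lambda_1}{2}\operatorname{Arg}(\xi_1-i\xi_2)}\,.
\]
On $\Omega_n$ we have $\operatorname{Arg}(\xi_1-i\xi_2)\in(-\tfrac\pi2,0)$, so the last factor is bounded by $e^{\pi|\lambda_1|/2}$; since $1\le\xi_1\le|\xi_1-i\xi_2|\le\xi_1+\xi_2\le 3\xi_1$, the middle factor is $\lesssim\xi_1^{\frac{\lambda_2-1}{2}}$ with a constant depending only on $\lambda$; and, crucially, $\xi_2\ge\tfrac1n$ yields $e^{-\frac12\xi_1\xi_2}\le e^{-\xi_1/(2n)}$. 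As $\varphi$ is bounded on $\R$, this produces a pointwise bound of the form $|\Psi_n(\xi_1,\xi_2)|^2\lesssim\xi_1^{\lambda_2-1}e^{-\xi_1/n}\,\mathds{1}_{\Omega_n}(\xi_1,\xi_2)$, the implicit constant depending on the fixed data $n,\lambda$.

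Finally I would integrate: the $\xi_2$-integral over $[\tfrac1n,\tfrac2n]$ contributes the finite factor $\tfrac1n$, while
\[
\int_{n^{\alpha-1}}^{\infty}\xi_1^{\lambda_2-1}\,e^{-\xi_1/n}\,\dd\xi_1<\infty
\]
because the exponential decay dominates the fixed power $\xi_1^{\lambda_2-1}$, and the lower limit $n^{\alpha-1}\ge 1$ keeps us away from the only possible singularity of $\xi_1\mapsto\xi_1^{\lambda_2-1}$, at the origin. Hence $\|\Psi_n\|<\infty$, i.e. $\Psi_n\in L^2(\R^2)$.

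I do not anticipate a genuine obstacle here; the only points requiring care are (i) using the cut-off $\varphi$ to remain in $\{\xi_1>0\}$ — for $\xi_1<0$ the factor $e^{-\frac12\xi_1\xi_2}$ would grow exponentially and destroy integrability, so the argument really uses the shape of $\varphi$, not just its smoothness — and (ii) noting that the branch point of $(\xi_1-i\xi_2)^{-(1+i\lambda)/2}$ at the origin never enters, which is automatic once $\xi_2\ge\tfrac1n>0$. Everything else is a routine estimate of a Gaussian-type integral.
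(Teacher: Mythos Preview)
Your proof is correct and follows essentially the same route as the paper: identify the support via the two cut-offs, bound $|u|$ pointwise using \eqref{Power Modul}, and integrate using the Gaussian-type decay $e^{-\xi_1\xi_2}$. The only cosmetic difference is that the paper performs the change of variable $\xi_1=s\xi_2$ before integrating, whereas you bound $e^{-\frac12\xi_1\xi_2}\le e^{-\xi_1/(2n)}$ and integrate directly in $\xi_1$; both lead to the same finite Gamma-type integral.
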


\begin{proof}
	Fix $n\in \N$, by using the change of variable $\xi_{1}=s \xi_{2}$, we have
	\begin{align*}
		\int_{\R^2} \vert \Psi_{n}(\xi_{1},\xi_{2}) \vert^2\, \dd \xi_{1}\dd \xi_{2}
		=& \int_{\frac{1}{n}}^{\frac{2}{n}}  \int_{n^{\alpha}\xi_{2}}^{+\infty} \left\vert \varphi\left(\frac{\xi_{1}}{n^{\alpha}\xi_{2}}\right) \right\vert^2 \vert u(\xi_{2},\xi_{2}) \vert^2\, \dd \xi_{1} \dd \xi_{2}\\
		\lesssim & \int_{\frac{1}{n}}^{\frac{2}{n}}  \int_{n^{\alpha}\xi_{2}}^{+\infty} e^{-\xi_{1}\xi_{2}}(\xi_{1}^2+\xi_{2}^2)^{\frac{1}{2}(\Im \lambda-1)}\, \dd \xi_{1} \dd \xi_{2}\\
		= & \int_{\frac{1}{n}}^{\frac{2}{n}} \xi_{2}^{\Im \lambda} \int_{n^{\alpha}}^{+\infty} e^{-\xi_{2}^2 s}(s^2+1)^{\frac{1}{2}(\Im \lambda-1)}\, \dd s \dd \xi_{2}\\
		\leq & \left(\int_{\frac{1}{n}}^{\frac{2}{n}} \xi_{2}^{\Im \lambda} \, \dd \xi_{2}\right) \left( \int_{0}^{+\infty} e^{-s/n^2}(s^2+1)^{\frac{1}{2}(\Im \lambda-1)}\,\dd s \right)<+\infty\,.
	\end{align*}
	\end{proof}

\begin{lemma}\label{lem.L2normPsini}
We have
\begin{equation}
 \Vert \Psi_{n} \Vert^2 \gtrsim \left\{\begin{aligned}
 & n^{\Im \lambda-1} &&\text{ if } \Im\lambda>0,\\
 & \ln(n) n^{-1}      &&\text{ if } \Im\lambda=0,
\end{aligned}  \right.
\end{equation}
as $n\to+\infty$.
\end{lemma}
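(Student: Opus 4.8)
The plan is to compute $\|\Psi_n\|^2$ essentially exactly, starting from the change of variables $\xi_1 = s\xi_2$ already used in the preceding lemma, and then to exploit the fact that on the support of $\Psi_n$ one has $\xi_2 \in [1/n, 2/n]$ (so $\xi_2$ is small) while the $s$-integration runs over $[n^\alpha, +\infty)$ with $\alpha \in (1,2)$. First I would write
\begin{equation*}
\|\Psi_n\|^2 = \int_{1/n}^{2/n} \left\vert \varphi\left(\frac{\xi_1}{n^\alpha \xi_2}\right)\right\vert^2 \ \cdots
\end{equation*}
and, after the substitution $\xi_1 = s\xi_2$, arrive at
\begin{equation*}
\|\Psi_n\|^2 = \int_{1/n}^{2/n} \xi_2^{\,\Im\lambda}\left(\int_{n^\alpha}^{+\infty} \left\vert\varphi(s)\right\vert^2 e^{-\xi_2^2 s}(s^2+1)^{\frac12(\Im\lambda-1)}\,\dd s\right)\dd\xi_2\,,
\end{equation*}
exactly as in the $L^2$-membership proof but now keeping track of lower bounds. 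Since $\varphi \equiv 1$ for $s \geq 2$ and $n^\alpha \to \infty$, the inner integral is bounded below by $\int_{n^\alpha}^{+\infty} e^{-\xi_2^2 s}(s^2+1)^{\frac12(\Im\lambda-1)}\,\dd s$.

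Next I would estimate the inner integral from below for $\xi_2 \in [1/n, 2/n]$, where $\xi_2^2 \le 4/n^2$. The key observation is that on this range the exponential weight $e^{-\xi_2^2 s}$ only becomes small once $s \gtrsim n^2$, so over the window $s \in [n^\alpha, C n^2]$ it is bounded below by a constant. Using $(s^2+1)^{\frac12(\Im\lambda - 1)} \approx s^{\Im\lambda - 1}$ for $s$ large, the inner integral is $\gtrsim \int_{n^\alpha}^{n^2} s^{\Im\lambda - 1}\,\dd s$ (absorbing the harmless factor $e^{-\xi_2^2 s} \geq e^{-4}$). When $\Im\lambda > 0$ this integral evaluates to $\approx n^{2\Im\lambda}$ (the upper endpoint dominates since $\alpha < 2$); when $\Im\lambda = 0$ it is $\int_{n^\alpha}^{n^2} s^{-1}\,\dd s = \ln(n^2/n^\alpha) = (2-\alpha)\ln n \approx \ln n$. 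One must be slightly careful to substitute a concrete constant for $C$ — e.g. take the upper cutoff at $s = n^2$ so that $e^{-\xi_2^2 s} \ge e^{-4}$ throughout — and to note $\alpha > 1$ guarantees $n^\alpha \ge 2$ eventually so the $\varphi \equiv 1$ region is reached.

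Finally I would reinsert this into the outer integral: the $\xi_2$-integration over $[1/n, 2/n]$ contributes $\int_{1/n}^{2/n}\xi_2^{\Im\lambda}\,\dd\xi_2 \approx n^{-1}\cdot n^{-\Im\lambda} = n^{-\Im\lambda - 1}$ (the interval has length $1/n$ and $\xi_2 \approx 1/n$ on it). Multiplying, in the case $\Im\lambda > 0$ we get $\|\Psi_n\|^2 \gtrsim n^{-\Im\lambda-1}\cdot n^{2\Im\lambda} = n^{\Im\lambda - 1}$, and in the case $\Im\lambda = 0$ we get $\|\Psi_n\|^2 \gtrsim n^{-1}\ln n$, which is exactly the claimed bound. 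The main obstacle — really the only subtle point — is getting a clean, honest lower bound on the inner $s$-integral: one needs the cutoff at the upper end (replacing $+\infty$ by something like $n^2$) to be chosen so that the exponential $e^{-\xi_2^2 s}$ stays bounded below on the whole integration window uniformly in $\xi_2 \in [1/n,2/n]$, while still leaving enough of the polynomial growth $s^{\Im\lambda-1}$ to produce the stated power of $n$; the interplay $1 < \alpha < 2$ is precisely what makes this work.
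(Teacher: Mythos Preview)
Your argument is correct and follows the same first step as the paper (the substitution $\xi_1 = s\xi_2$), but your estimate of the inner $s$-integral is handled differently. The paper makes the further change $t=\xi_2^2 s$ to rewrite the inner integral as the incomplete Gamma function $\Gamma(\Im\lambda,\,2n^{\alpha}\xi_2^2)$ and then uses its behaviour as the second argument tends to~$0$ (which is where the condition $\alpha<2$ enters). Your truncation at $s=n^2$ together with $e^{-\xi_2^2 s}\ge e^{-4}$ on $[n^\alpha,n^2]$ for $\xi_2\in[1/n,2/n]$ is a more elementary substitute that avoids the Gamma function altogether and makes the role of $\alpha\in(1,2)$ completely transparent; both give the same bounds.

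One small slip to clean up: after $\xi_1=s\xi_2$ the cutoff becomes $\varphi(s/n^\alpha)$, not $\varphi(s)$, so $\varphi\equiv 1$ holds for $s\ge 2n^\alpha$ (not for $s\ge 2$). Consequently the honest lower bound for the inner integral starts at $2n^\alpha$ rather than $n^\alpha$, and the remark ``$\alpha>1$ guarantees $n^\alpha\ge 2$ eventually'' is unnecessary. This does not affect any of the asymptotics.
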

\begin{proof}
We have
	\begin{align*}
		\Vert \Psi_{n} \Vert^2 	\geq & \int_{\frac{1}{n}}^{\frac{2}{n}}  \int_{2n^{\alpha}\xi_{2}}^{+\infty} e^{-\xi_{1}\xi_{2}}(\xi_{1}^2+\xi_{2}^2)^{\frac{1}{2}(\Im \lambda-1)}\, \dd \xi_{1} \,\dd \xi_{2}\\
		= & \int_{\frac{1}{n}}^{\frac{2}{n}} \xi_{2}^{\Im \lambda} \int_{2n^{\alpha}}^{+\infty} e^{-\xi_{2}^2 s}(s^2+1)^{\frac{1}{2}(\Im \lambda-1)}\, \dd s\, \dd \xi_{2} \qquad &&\left( \text{change }\xi_{1}=\xi_{2}s\right)\\
		\approx & \int_{\frac{1}{n}}^{\frac{2}{n}} \xi_{2}^{\Im \lambda} \int_{2n^{\alpha}}^{+\infty} e^{-\xi_{2}^2 s}s^{\Im \lambda-1}\, \dd s \,\dd \xi_{2} \qquad &&\left( \text{use }s^2\leq s^2+1\leq 2s^2, \forall s\geq 1\right)\\
		=& \int_{\frac{1}{n}}^{\frac{2}{n}} \xi_{2}^{-\Im \lambda} \int_{2n^{\alpha}\xi_{2}^2}^{+\infty} e^{-t} t^{\Im \lambda-1}\,\dd t\, \dd \xi_{2} &&\left( \text{change } t=\xi_{2}^2s\right)\\
		=&\int_{\frac{1}{n}}^{\frac{2}{n}} \xi_{2}^{-\Im \lambda} \Gamma(\Im \lambda, 2n^{\alpha}\xi_{2}^2)\, \dd \xi_{2}\,,
	\end{align*}
	where 
	$
	\Gamma(a,x)
	= \int_{x}^{+\infty} t^{a-1}e^{-t}\, \dd t
	$ is the incomplete Gamma function. 
	Since $\alpha<2$ and $2n^{\alpha}\xi_{2}^2\leq 8n^{\alpha-2}$ for all $\xi_{2}\in \left[\frac{1}{n},\frac{2}{n}\right]$, it implies that $\lim_{n\to+\infty} 2n^{\alpha}\xi_{2}^2=0$. Now we consider two cases. When $\Im\lambda>0$,  we have, by dominated convergence theorem,
	$$\lim_{n\to +\infty}\Gamma(\Im \lambda, 2 n^{\alpha}\xi_{2}^2)=\Gamma(\Im\lambda)\,.$$
Thus, it leads to
			\[ \Vert \Psi_{n} \Vert^2 \gtrsim \int_{\frac{1}{n}}^{\frac{2}{n}} \xi_{2}^{-\Im \lambda}\, \dd \xi_{2} \approx n^{\Im \lambda-1}\,.\]
	When $\Im\lambda=0$, we have
			\[ \Gamma(0,2n^{\alpha}\xi_{2}^2) \approx -\ln(2n^{\alpha}\xi_{2}^2) \approx -\ln(n^{\alpha-2})\approx \ln(n),\qquad \text{for all } \xi_{2}\in \left[\frac{1}{n},\frac{2}{n}\right] ,\text{ as }n\to +\infty\,,\]
and then
		\[ \Vert \Psi_{n} \Vert^2 \gtrsim \int_{\frac{1}{n}}^{\frac{2}{n}}\ln(n)\, \dd \xi_{2} \approx \ln(n) n^{-1}\,.\]
	\end{proof}

\begin{proposition}\label{prop.Weyli}
We have
\[\Vert (\widehat{\mathscr{L}}_{i}-\lambda)\Psi_{n} \Vert^2\lesssim n^{-\Im \lambda-1-2\alpha}\,,\]
as $n\to+\infty$.
\end{proposition}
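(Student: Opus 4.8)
The plan is to estimate $\|(\widehat{\mathscr{L}}_i-\lambda)\Psi_n\|^2$ directly by exploiting the fact that $u$ in~\eqref{function u} already solves the associated first-order ODE, so that the only contributions come from where the cut-off $\varphi(\xi_1/(n^\alpha\xi_2))$ is differentiated. First I would compute, using the chain rule and the explicit form~\eqref{L i Hat} of $\widehat{\mathscr{L}}_i$ together with $(\widehat{\mathscr{L}}_i-\lambda)u=0$,
\[
(\widehat{\mathscr{L}}_i-\lambda)\Psi_n(\xi_1,\xi_2)=\mathds{1}_{[1,2]}(n\xi_2)\,\frac{2(i\xi_1+\xi_2)}{n^\alpha\xi_2}\,\varphi'\!\left(\frac{\xi_1}{n^\alpha\xi_2}\right)u(\xi_1,\xi_2)\,.
\]
The key structural point is that $\varphi'$ is supported in the region $1\le \xi_1/(n^\alpha\xi_2)\le 2$, i.e.\ $\xi_1\approx n^\alpha\xi_2$; on this set $|i\xi_1+\xi_2|\lesssim \xi_1\approx n^\alpha\xi_2$, so the prefactor $|2(i\xi_1+\xi_2)/(n^\alpha\xi_2)|$ is bounded by a constant, which is the heart of the gain.

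Next I would bound the resulting $L^2$-norm by the same change of variables $\xi_1=\xi_2 s$ used in the preceding lemmas. Since $|u(\xi_1,\xi_2)|^2\lesssim e^{-\xi_1\xi_2}(\xi_1^2+\xi_2^2)^{(\Im\lambda-1)/2}$ and $\varphi'$ is supported in $s\in[n^\alpha,2n^\alpha]$, one gets
\[
\|(\widehat{\mathscr{L}}_i-\lambda)\Psi_n\|^2\lesssim \int_{1/n}^{2/n}\xi_2^{\Im\lambda}\int_{n^\alpha}^{2n^\alpha}e^{-\xi_2^2 s}(s^2+1)^{(\Im\lambda-1)/2}\,\dd s\,\dd\xi_2\,.
\]
On the $s$-interval $[n^\alpha,2n^\alpha]$ one has $(s^2+1)^{(\Im\lambda-1)/2}\approx s^{\Im\lambda-1}\approx n^{\alpha(\Im\lambda-1)}$, and the $s$-integral has length $\approx n^\alpha$; the exponential $e^{-\xi_2^2 s}$ is harmless (bounded by $1$, and in fact small, but $1$ suffices). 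Hence the $s$-integral is $\lesssim n^{\alpha\Im\lambda}$, and since $\xi_2^{\Im\lambda}\approx n^{-\Im\lambda}$ on $[1/n,2/n]$ and that interval has length $1/n$, we arrive at
\[
\|(\widehat{\mathscr{L}}_i-\lambda)\Psi_n\|^2\lesssim n^{-1}\cdot n^{-\Im\lambda}\cdot n^{\alpha\Im\lambda}\,.
\]

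At this point the exponent is $-1-\Im\lambda+\alpha\Im\lambda=-1+(\alpha-1)\Im\lambda$, whereas the claimed bound is $n^{-\Im\lambda-1-2\alpha}$. The discrepancy means the crude estimate above throws away too much: the decay must instead be extracted from the Gaussian factor $e^{-\xi_2^2 s}$, not discarded. So the step I expect to be the real obstacle is keeping track of $e^{-\xi_2^2 s}$: on the support, $\xi_2^2 s\gtrsim n^{-2}\cdot n^\alpha=n^{\alpha-2}\to 0$, so it does \emph{not} by itself produce polynomial decay in $n$; rather, one must be more careful about the prefactor from $\varphi'$ and about which powers of $n^\alpha$ and $\xi_2$ actually appear, perhaps integrating by parts once more in $s$ or using that $\varphi'$ can be chosen with $|\varphi'(s)|\lesssim 1$ only on a set where additional smallness is available. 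Concretely, I would redo the bound on $|2(i\xi_1+\xi_2)/(n^\alpha\xi_2)\,\varphi'(\xi_1/(n^\alpha\xi_2))|$ tracking constants exactly: $|i\xi_1+\xi_2|^2=\xi_1^2+\xi_2^2$, and on the support $\xi_1\approx n^\alpha\xi_2$ with $\xi_2\approx 1/n$, so $\xi_1\approx n^{\alpha-1}$, giving $|i\xi_1+\xi_2|\approx n^{\alpha-1}$ and hence the prefactor $\approx n^{\alpha-1}/(n^\alpha\cdot n^{-1})=1$ — consistent with the above, so the extra powers $n^{-2\alpha}$ genuinely have to come from combining the Gaussian with the large lower limit $n^\alpha$ in the $s$-integral after the substitution $t=\xi_2^2 s$: there $e^{-t}$ over $t\ge \xi_2^2 n^\alpha$ together with the measure gives the decay. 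I would therefore substitute $t=\xi_2^2 s$ as in Lemma~\ref{lem.L2normPsini}, bound $\int_{\xi_2^2 n^\alpha}^{\infty} e^{-t}t^{(\Im\lambda-1)/2}\,(\cdots)\,\dd t$ and the remaining $\xi_1$-free factors carefully, and only then collect powers of $n$; reconciling the two sides to the exact exponent $-\Im\lambda-1-2\alpha$ is the one place where I would grind through the arithmetic rather than sketch it.
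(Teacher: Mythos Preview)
Your approach is exactly the one in the paper: compute $(\widehat{\mathscr{L}}_i-\lambda)\Psi_n$ using that $u$ kills the operator, then integrate over the support of $\varphi'$ after the change of variable $\xi_1=s\xi_2$. Your arithmetic is also correct: the honest bound is
\[
\|(\widehat{\mathscr{L}}_i-\lambda)\Psi_n\|^2 \;\approx\; n^{(\alpha-1)\Im\lambda-1}\,,
\]
not $n^{-\Im\lambda-1-2\alpha}$. The discrepancy you detected is not a gap in your reasoning but two slips in the paper's displayed proof: the factor $|i\xi_1/\xi_2+1|^2=(\xi_1/\xi_2)^2+1\approx n^{2\alpha}$ is silently dropped in the second line, and the inner integral $\int_{n^\alpha}^{2n^\alpha}e^{-\xi_2^2 s}(s^2+1)^{(\Im\lambda-1)/2}\,\dd s\approx n^{\alpha\Im\lambda}$ is bounded by a constant in the last line, which is false for $\Im\lambda>0$. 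Undoing these two errors multiplies the paper's claimed bound by $n^{2\alpha}\cdot n^{\alpha\Im\lambda}$, which recovers exactly your exponent. No amount of exploiting $e^{-\xi_2^2 s}$ will help, as you yourself noticed: on the support $\xi_2^2 s\lesssim n^{\alpha-2}\to 0$, so the exponential is $\approx 1$.

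So do not chase the stated exponent. Your bound $n^{(\alpha-1)\Im\lambda-1}$ is the correct one, and it is all that is needed for the Weyl-sequence conclusion: combined with Lemma~\ref{lem.L2normPsini},
\[
\frac{\|(\widehat{\mathscr{L}}_i-\lambda)\Psi_n\|^2}{\|\Psi_n\|^2}\lesssim
\begin{cases}
n^{(\alpha-2)\Im\lambda} & \text{if }\Im\lambda>0,\\[1mm]
(\ln n)^{-1} & \text{if }\Im\lambda=0,
\end{cases}
\]
which tends to $0$ since $\alpha<2$. The proposition as stated in the paper is too strong; your weaker estimate is both true and sufficient.
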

\begin{proof}
Notice that
\begin{align*}
	(\widehat{\mathscr{L}}_{i}-\lambda)\Psi_{n}(\xi_{1},\xi_{2}) = 2\chi_{[1,2]}(n\xi_{2}) \frac{1}{n^{\alpha}} \left(i\frac{\xi_{1}}{\xi_{2}}+1\right)\varphi'\left(\frac{\xi_{1}}{n^{\alpha}\xi_{2}}\right)u(\xi_{1},\xi_{2}).
\end{align*}
Then, we have
\begin{align*}
	\Vert (\widehat{\mathscr{L}}_{i}-\lambda)\Psi_{n} \Vert^2 = & 4\int_{\frac{1}{n}}^{\frac{2}{n}}  \int_{n^{\alpha}\xi_{2}}^{2n^{\alpha}\xi_{2}} \frac{1}{n^{2\alpha}}\left(\left(\frac{\xi_{1}}{\xi_{2}}\right)^2+1\right)\left\vert \varphi'\left(\frac{\xi_{1}}{n^{\alpha}\xi_{2}}\right) \right\vert^2 \vert u(\xi_{2},\xi_{2}) \vert^2\, \dd \xi_{1} \dd \xi_{2}\\
	\lesssim & \frac{1}{n^{2\alpha}}\int_{\frac{1}{n}}^{\frac{2}{n}}  \int_{n^{\alpha}\xi_{2}}^{2n^{\alpha}\xi_{2}}e^{-\xi_{1}\xi_{2}}(\xi_{1}^2+\xi_{2}^2)^{\frac{1}{2}(\Im \lambda-1)}\, \dd \xi_{1} \dd \xi_{2}\\
	= &\frac{1}{n^{2\alpha}} \int_{\frac{1}{n}}^{\frac{2}{n}} \xi_{2}^{\Im \lambda} \int_{n^{\alpha}}^{2n^{\alpha}} e^{-\xi_{2}^2 s}(s^2+1)^{\frac{1}{2}(\Im \lambda-1)}\, \dd s \dd \xi_{2}\\
	\lesssim &\frac{1}{n^{2\alpha}} \int_{\frac{1}{n}}^{\frac{2}{n}} \xi_{2}^{\Im \lambda} \,\dd \xi_{2}\approx \, n^{-\Im \lambda-1-2\alpha}.
\end{align*}	
\end{proof}
By setting $f_{n}= \frac{\Psi_{n}}{\Vert \Psi_{n} \Vert}$, using Lemma \ref{lem.L2normPsini} and Proposition \ref{prop.Weyli}, we get
	\[	\lim_{n\to+\infty}\Vert (\widehat{\mathscr{L}}_{i}-\lambda)f_{n} \Vert=0\,.\]
	This shows that
		\[ \{\lambda\in \C: \Im \lambda\geq 0\}\subset \operatorname{Spec}(\widehat{\mathscr{L}}_{i})=\operatorname{Spec}(\mathscr{L}_{i})\,.\]
Since $\alpha>1$ and $\operatorname{Supp}(f_{n})\subset [n^{\alpha-1},+\infty] \times \left[ \frac{1}{n}, \frac{2}{n}\right]$, support of $f_{n}$ moves to infinity in $\xi_{1}$ direction. Hence, $f_{n}$ weakly converges to $0$ in $L^2(\R^2)$. 

\section*{Acknowledgements}
D.K. and T.N.D. were supported by the EXPRO grant number 20-17749X of the Czech Science Foundation (GA\v{C}R).
			\bibliographystyle{amsplain}
			\bibliography{Ref11}

\providecommand{\bysame}{\leavevmode\hbox to3em{\hrulefill}\thinspace}
\providecommand{\MR}{\relax\ifhmode\unskip\space\fi MR }
\providecommand{\MRhref}[2]{%
  \href{http://www.ams.org/mathscinet-getitem?mr=#1}{#2}
}
\providecommand{\href}[2]{#2}
\begin{thebibliography}{10}

\bibitem{Arnal-Siegl_2023}
A.~Arnal and P.~Siegl, \emph{Resolvent estimates for one-dimensional
  {S}chr{\"o}dinger operators with complex potentials}, J. Funct. Anal.
  \textbf{284} (2023), 109856.

\bibitem{Avron-Herbst-Simon_1978}
J.~Avron, I.~Herbst, and B.~Simon, \emph{{S}chr{\"o}dinger operators with
  magnetic fields. {I}. {G}eneral interactions}, Duke Math. J. \textbf{45}
  (1978), 847--883.

\bibitem{Brezis11}
H.~Brezis, \emph{Functional analysis, {S}obolev spaces and partial differential
  equations}, Universitext, Springer, New York, 2011.

\bibitem{Camara-Krejcirik23}
M.~C. C{\^a}mara and D.~Krejcirik, \emph{Complex-self-adjointness}, Analysis
  and Mathematical Physics \textbf{13} (2023).

\bibitem{CR21}
C.~Cheverry and N.~Raymond, \emph{A guide to spectral theory: Applications and
  exercises}, Birkh\"{a}user Advanced Texts: Basler Lehrb\"{u}cher,
  Birkh\"{a}user/Springer, Cham, 2021.

\bibitem{CFKS-2nd}
H.~L. Cycon, R.~G. Froese, W.~Kirsch, and B.~Simon, \emph{{S}chr{\"o}dinger
  operators, with application to quantum mechanics and global geometry},
  Springer-Verlag, Berlin, 1987, 2nd corrected printing 2008.

\bibitem{Davies_1999a}
E.~B. Davies, \emph{Pseudo-spectra, the harmonic oscillator and complex
  resonances}, Proc. R. Soc. Lond. A \textbf{455} (1999), 585--599.

\bibitem{Edmunds-Evans18}
D.~E. Edmunds and W.~D. Evans, \emph{Spectral theory and differential
  operators}, Oxford Mathematical Monographs, Oxford University Press, Oxford,
  2018.

\bibitem{Erdos_2007}
L.~Erd\H{o}s, \emph{Recent developments in quantum mechanics with magnetic
  fields}, Spectral Theory and Mathematical Physics: A Festschrift in Honor of
  Barry Simon's 60th Birthday: Quantum Field Theory, Statistical Mechanics, and
  Nonrelativistic Quantum Systems, Proc. of Symposia in Pure Math., vol.~76,
  2007, pp.~401--428.

\bibitem{Fournais-Helffer_2009}
S.~Fournais and B.~Helffer, \emph{Diamagnetism}, Spectral Methods in Surface
  Superconductivity, Progress in Nonlinear Differential Equations and Their
  Applications, vol.~77, Birkh\"auser Boston, 2009, pp.~19--30.

\bibitem{K13}
D.~Krej\v{c}i\v{r}\'{\i}k, \emph{Complex magnetic fields: {A}n improved
  {H}ardy-{L}aptev-{W}eidl inequality and quasi-self-adjointness}, SIAM J.
  Math. Anal. \textbf{51} (2019), 790--807.

\bibitem{KNR24}
D.~Krej\v{c}i\v{r}\'ik, T.~Nguyen~Duc, and N.~Raymond, \emph{The {L}aplacian
  with complex magnetic fields},  (2024), arXiv:2410.01377 [math-ph].

\bibitem{Krejcirik-Siegl15}
D.~Krej\v{c}i\v{r}\'{\i}k and P.~Siegl, \emph{Elements of spectral theory
  without the spectral theorem}, Wiley, Hoboken, NJ, 2015.

\bibitem{Landau30}
L.~{Landau}, \emph{{Diamagnetismus der Metalle}}, Zeitschrift fur Physik
  \textbf{64} (1930), 629--637.

\bibitem{Mohamed-Raikov}
A.~Mohamed and G.~D. Raikov, \emph{On the spectral theory of the
  {S}chr{\"o}dinger operator with electromagnetic potential},
  Pseudo-Differential Calculus and Mathematical Physics, Adv. Part. Diff.
  Equat., vol.~5, Akademie-Verlag, 1994, 1990, pp.~298--390.

\bibitem{Raymond-book}
N.~Raymond, \emph{Bound states of the magnetic {S}chr{\"o}dinger operator},
  EMS, Z{\"u}rich, 2017.

\bibitem{Zworski12}
M.~Zworski, \emph{Semiclassical analysis}, Grad. Stud. Math., vol. 138,
  Providence, RI: American Mathematical Society (AMS), 2012.

\end{thebibliography}
			\end{document}